\newtheorem{thm}{Theorem}
\newtheorem{cor}[thm]{Corollary}
\newtheorem{lem}[thm]{Lemma}
\newtheorem{exa}[thm]{Example}
\let\@@pmod\pmod
\DeclareRobustCommand{\pmod}{\@ifstar\@pmods\@@pmod}
\def\@pmods#1{\mkern4mu({\operator@font mod}\mkern 6mu#1)}
\newcommand*\kronecker[2]{%
  \relax\if@display
    \expandafter{(\frac{#1}{#2})}
  \else
    \expandafter{\bigl(\frac{#1}{#2}\bigr)}
  \fi
}
\begin{document}

\title[Higher Reciprocity Laws and Ternary Linear Recurrence Sequences]{Higher Reciprocity Laws 
and Ternary Linear Recurrence Sequences}

\author[Pieter Moree]{Pieter Moree}
\address{Max Planck Institute for Mathematics, Bonn}
\email{moree@mpim-bonn.mpg.de}

%
\author[Armand Noubissie]{Armand Noubissie}
\address{Max Planck Institute for Mathematics, Bonn}
\email{noubissie@mpim-bonn.mpg.de or a.noubissie94@gmail.com}  
%

%


\begin{abstract}
We describe the set of prime numbers splitting completely in 
the non-abelian splitting field of certain monic irreducible 
polynomials of degree $3.$ As an application 
we establish some divisibility properties of 
the associated ternary recurrence sequence by 
primes $p$,
thus greatly extending recent work of Evink and Helminck and of Faisant.
We also prove some new results on the number of solutions
of the characteristic equation of the recurrence sequence 
modulo $p,$ extending and
simplifying earlier work of Zhi-Hong Sun (2003).
\end{abstract}

\maketitle

\section{Introduction}\label{sec:intro}
The Tribonacci sequence 
$(T_n)_{n\ge 1}$ is a generalization of the Fibonacci 
sequence. It is defined by $T_1=1, T_2=1, T_3=2$, and 
by the third 
order linear recurrence 
$T_n=T_{n-1}+T_{n-2}+T_{n-3}$ for every $n\ge 4.$
Using class field theory, Evink and Helminck \cite{EH} proved the 
following intriguing result.
\begin{thm}
\label{Dutch}
Any prime $p\nmid 11\cdot 19$ divides $T_{p-1}$ if and only if 
it is represented by the binary form 
$X^2+11Y^2.$
\end{thm}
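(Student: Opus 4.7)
The plan is to establish the chain of equivalences: $p \mid T_{p-1}$ iff $f(X) := X^3 - X^2 - X - 1$ splits into three distinct linear factors modulo $p$, iff $p$ splits completely in the splitting field $L$ of $f$ over $\mathbb{Q}$, iff $p$ is represented by $X^2 + 11Y^2$.

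\textbf{Identification of $L$ as a ring class field.} One computes $\mathrm{disc}(f) = -44$, so $L \supset K := \mathbb{Q}(\sqrt{-11})$ and $\mathrm{Gal}(L/\mathbb{Q}) \cong S_3$. For the order $\mathcal{O} := \mathbb{Z}[\sqrt{-11}]$ of conductor $2$ in $K$, the standard conductor formula yields $|\mathrm{Cl}(\mathcal{O})| = 3$, so the ring class field $H_\mathcal{O}$ is a $K$-cyclic cubic extension, dihedral of degree $6$ over $\mathbb{Q}$. I would show $L = H_\mathcal{O}$ by verifying that $L/K$ is abelian and unramified outside the conductor $(2)$ (via the field discriminant, which divides $-44$) and invoking the uniqueness of such extensions in class field theory. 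With $L = H_\mathcal{O}$ in hand, the classical theorem of Cox on primes of the form $x^2 + ny^2$ gives directly: for $p \nmid 44$, $p = X^2 + 11Y^2$ iff $p$ splits completely in $L$ iff $f$ splits into three distinct linear factors modulo $p$.

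\textbf{Linking $T_{p-1}$ to the factorization of $f$ modulo $p$.} Starting from the partial-fraction formula
\[
T_n \;=\; \sum_{f(\alpha)=0} \frac{\alpha^{n+1}}{f'(\alpha)},
\]
which is verified by checking $T_0 = 0$, $T_1 = 1$, $T_2 = 1$, the ``if'' direction is immediate: when $f$ has three distinct roots in $\mathbb{F}_p^{\times}$, Fermat gives $\alpha^p = \alpha$ and so $T_{p-1} \equiv \sum \alpha/f'(\alpha) = T_0 = 0 \pmod{p}$. For the converse, one splits according to the Frobenius class in $S_3$: when Frobenius is a transposition ($f$ = linear times irreducible quadratic), $T_{p-1}$ becomes an explicit expression involving the $\mathbb{F}_p$-root together with a trace from $\mathbb{F}_{p^2}$; when Frobenius is a $3$-cycle ($f$ irreducible), $T_{p-1}$ equals an $A_3$-invariant combination of the roots lying in $\mathbb{Q}(\sqrt{-11})$. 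In each case the explicit formula produces a congruence whose vanishing modulo $p$ is ruled out except for the two exceptional primes $p = 11$ (where $f$ has a repeated root) and $p = 19$.

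\textbf{Main obstacle.} The principal difficulty is the ``only if'' direction above, i.e., excluding accidental vanishings of $T_{p-1}$. The prime $p = 19$ is a genuine coincidence: one computes $f \equiv (X-13)(X^2+12X+3) \pmod{19}$ with irreducible quadratic factor, yet $T_{18} \equiv 0 \pmod{19}$, so the exclusion of $19$ in the statement is forced rather than cosmetic. Pinpointing why $19$ is the only such exception---via the specific congruence relating the $\mathbb{F}_{p^2}/\mathbb{F}_p$-trace of $B\beta^{p-1}$ with the contribution of the rational root---is the most delicate part of the argument. The identification $L = H_\mathcal{O}$ is more technical than hard, and can be completed either by a discriminant comparison at the primes $2$ and $11$ or by exhibiting an explicit generator of $H_\mathcal{O}$ over $K$ and matching it with a root of $f$.
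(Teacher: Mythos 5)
Your proposal is correct and matches the paper's own route: the paper obtains Theorem \ref{Dutch} by combining Theorem \ref{thm2"} (the Binet-formula/Frobenius-class computation showing that, away from finitely many primes, $p\mid T_{p-1}$ if and only if $N_p(x^3-x^2-x-1)=3$, the transposition case producing the resultant $\mathrm{Res}(3x-1,f)=-38$ that forces the genuine exception $p=19$ you verified) with Theorems \ref{thm1} and \ref{thm3} (class field theory identifying the splitting field $L$ as the class field of $\mathbb{Q}(\sqrt{-11})$ of conductor $\langle 2\rangle$, whence $p$ splits completely in $L$ if and only if $p=X^2+11Y^2$). Your only deviation is cosmetic: you phrase the second step via the ring class field of the order $\mathbb{Z}[\sqrt{-11}]$ and Cox's Theorem 9.4 rather than the ray class field of conductor $\langle 2\rangle$; these coincide here, and the paper's proof of Theorem \ref{thm3} itself follows Cox.
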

Here and in the rest of the paper the letter $p$ will
be exclusively used to denote prime numbers.
\par More recently Faisant \cite{F} 
established a similar result for the Padovan sequence 
 defined by $B_0=0, B_1=B_2=1$, and 
by 
$B_n=B_{n-2}+B_{n-3}$ for every $n\ge 3.$
\begin{thm}
\label{Faisant}
A prime $p$ be divides  $B_{p-1}$ if and only if 
$p$ is represented by the binary form 
$X^2+23Y^2$ with $X\ne 0$ and $Y$ integers.
\end{thm}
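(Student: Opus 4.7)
My plan is to follow the class-field-theoretic framework of Evink and Helminck, adapted to the characteristic polynomial $f(x)=x^{3}-x-1$ of the Padovan recurrence. Since $\mathrm{disc}(f)=-23$, the splitting field $K$ of $f$ is an $S_{3}$-extension of $\mathbb{Q}$ whose unique quadratic subfield is $F=\mathbb{Q}(\sqrt{-23})$, and $\mathrm{Gal}(K/F)\cong A_{3}$ is cyclic of order three. The class number of $F$ equals three, and a standard local analysis at the unique ramified prime shows that $K/F$ is everywhere unramified; since $[K:F]=3=h(F)$, this forces $K$ to be the Hilbert class field of $F$.

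The heart of the argument is the equivalence, for $p\nmid 23$,
\begin{equation*}
p\mid B_{p-1}\Longleftrightarrow p\text{ splits completely in }K.
\end{equation*}
Let $M$ be the companion matrix of $f$ and write $v_{n}=(B_{n},B_{n+1},B_{n+2})^{T}=M^{n}v_{0}$ with $v_{0}=(0,1,1)^{T}$. If $p$ splits completely then $f$ has three distinct roots $\alpha,\beta,\gamma\in\mathbb{F}_{p}^{\times}$, so $M$ is diagonalisable over $\mathbb{F}_{p}$ with all eigenvalues satisfying $\lambda^{p-1}=1$; thus $M^{p-1}\equiv I\pmod{p}$, which gives $B_{p-1}\equiv B_{0}=0$. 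For the converse I use the closed form
\begin{equation*}
B_{n}=\sum_{f(\alpha)=0}\frac{(1+\alpha)\alpha^{n}}{f'(\alpha)},
\end{equation*}
which is verified on the initial data from the classical identity $\sum_{\alpha}\alpha^{k}/f'(\alpha)=\delta_{k,2}$ for $0\le k\le 2$ together with $\alpha^{3}=\alpha+1$. When $f$ factors modulo $p$ as a linear times an irreducible quadratic, Frobenius fixes the rational root $\alpha$ and swaps the other two, and a short manipulation using $\alpha+\beta+\gamma=0$, $\alpha\beta+\alpha\gamma+\beta\gamma=-1$, $\alpha\beta\gamma=1$ collapses $B_{p-1}$ to
\begin{equation*}
B_{p-1}\equiv\frac{\alpha}{f'(\alpha)}=\frac{\alpha}{3\alpha^{2}-1}\pmod{p},
\end{equation*}
which is non-zero because neither $\alpha\equiv 0$ nor $3\alpha^{2}\equiv 1\pmod{p}$ is consistent with $\alpha^{3}=\alpha+1$ unless $p=23$. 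The totally inert case is treated analogously, with the cyclic Frobenius action on $\{\alpha,\beta,\gamma\}\subset\mathbb{F}_{p^{3}}$ yielding a Galois-invariant expression that one checks to be non-zero using the same symmetric function data.

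With the equivalence established, class field theory delivers the arithmetic criterion: $p$ splits completely in the Hilbert class field of $F$ precisely when $(p)=\mathfrak{p}\overline{\mathfrak{p}}$ with $\mathfrak{p}$ principal in $\mathcal{O}_{F}$, equivalently when $p$ is represented by the principal form $X^{2}+XY+6Y^{2}$ of discriminant $-23$. Completing the square gives $4(X^{2}+XY+6Y^{2})=(2X+Y)^{2}+23Y^{2}$; moreover $X^{2}+XY+6Y^{2}\equiv X(X+Y)\pmod{2}$ forces $Y$ to be even whenever this value is an odd prime, so setting $Y=2V$ produces the desired identity $p=(X+V)^{2}+23V^{2}$, with the converse following from $(U-V)^{2}+(U-V)(2V)+6(2V)^{2}=U^{2}+23V^{2}$. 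The exceptional primes $p=2$ and $p=23$ are dispatched by direct inspection.

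The main obstacle I anticipate is the "only if" direction of the central equivalence. The mixed split case collapses to the transparent non-vanishing of $\alpha/(3\alpha^{2}-1)$ in $\mathbb{F}_{p}$; the totally inert case, however, forces one to manipulate an element with a length-three Frobenius orbit in $\mathbb{F}_{p^{3}}^{\times}$, and collapsing the resulting Galois-invariant combination to a manifestly non-zero residue in $\mathbb{F}_{p}$ is more delicate, although it again rests on the elementary symmetric function identities for $f$.
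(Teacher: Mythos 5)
Your proposal is correct and takes essentially the same route as the paper, which proves $p\mid u_{p-1}\Leftrightarrow N_p(f)=3$ in Theorem \ref{thm1"} by the same Binet-plus-Frobenius computation and then, in Theorems \ref{thm1} and \ref{thm3}, identifies the splitting field with the Hilbert class field of $\mathbb{Q}(\sqrt{-23})$ and passes from the principal form of discriminant $-23$ to $X^2+23Y^2$ by the same parity argument. The one step you leave unexecuted, the non-vanishing of $B_{p-1}\bmod p$ in the totally inert case, does go through: with $\Delta=(\alpha-\beta)(\alpha-\gamma)(\gamma-\beta)$ one gets $\Delta B_{p-1}\equiv(\gamma-\beta)\alpha^{2}\beta+(\alpha-\gamma)\beta^{2}\gamma+(\beta-\alpha)\gamma^{2}\alpha=\alpha\beta\gamma(\alpha+\beta+\gamma)-(\alpha\beta+\beta\gamma+\gamma\alpha)^{2}\equiv-1\pmod{\mathfrak{B}}$, which is exactly the computation in the paper's proof of Theorem \ref{thm1"}.
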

Recall that the Ramanujan tau function 
$\tau(n)$ is defined as the coefficient of $q^n$ in the formal series
expansion of 
$q\prod_{k=1}^{\infty}(1-q^k)^{24}.$
The Ramanujan-Wilton \cite{wilton} congruences state that
modulo 23 we have
$$
\tau(p)\equiv
\begin{cases}
\phantom{-}1\ &\textrm{if}\ p=23;\\
\phantom{-}0\ &\textrm{if}\ \kronecker{p}{23} =-1;\\
\phantom{-}2\ &\textrm{if}\ p=X^2+23Y^2, \,X\ne 0;\\
-1\ &\textrm{otherwise}.\\
\end{cases}
$$
Denote by
$N_p(f)$ the number of distinct roots modulo $p$ of
a polynomial $f\in \mathbb Z[x].$ It is known that
$\tau(p)\equiv N_p(x^3-x-1)-1\pmod*{23},$ cf.\,Serre \cite[p.\,437]{serreJordan} 
or \cite[pp.\,42--43]{123}.
This then leads to the following 
reformulation of Faisant's theorem.
\begin{thm}
\label{thm:padua}
A prime $p$ divides  
$B_{p-1}$ if and only if
$N_p(x^3-x-1)=3$ if and only if
$\tau(p)\equiv 2 \pmod*{23}.$
\end{thm}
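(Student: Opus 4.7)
My plan is to deduce Theorem~\ref{thm:padua} by chaining together three facts already recalled in the introduction: Faisant's Theorem~\ref{Faisant}, the Ramanujan--Wilton congruences modulo~$23$, and Serre's identity $\tau(p)\equiv N_p(x^3-x-1)-1\pmod*{23}$. No new ingredients are needed; the proof is essentially bookkeeping, together with a short direct check at the ramified prime $p=23$ (the discriminant of $x^3-x-1$ equals $-23$).

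\textbf{Step 1: $p\mid B_{p-1}\Longleftrightarrow \tau(p)\equiv 2\pmod*{23}$.} By Theorem~\ref{Faisant}, the left-hand side is equivalent to $p$ being of the form $X^2+23Y^2$ with $X\neq 0$. Inspection of the Ramanujan--Wilton table shows that this is precisely the case in which $\tau(p)\equiv 2\pmod*{23}$; the other three residues $\{1,0,-1\}$ attained by $\tau(p)$ modulo $23$ are pairwise distinct from $2$, so the equivalence is immediate for $p\neq 23$. The case $p=23$ is settled by Faisant's theorem itself: $23=0^2+23\cdot 1^2$ forces $X=0$, whence $23\nmid B_{22}$, while the Ramanujan--Wilton table gives $\tau(23)\equiv 1\not\equiv 2\pmod*{23}$, consistent with this.

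\textbf{Step 2: $\tau(p)\equiv 2\pmod*{23}\Longleftrightarrow N_p(x^3-x-1)=3$.} Serre's identity converts the left-hand side into the congruence $N_p(x^3-x-1)\equiv 3\pmod*{23}$. Since $x^3-x-1$ has degree $3$, we have $N_p(x^3-x-1)\in\{0,1,2,3\}$, and these four integers are pairwise distinct modulo $23$; hence the congruence is equivalent to the equality $N_p(x^3-x-1)=3$. At $p=23$ the polynomial has a repeated root (its discriminant being $-23$), so $N_{23}(x^3-x-1)\leq 2<3$, again consistent with Step~1. The only real obstacle in this approach is verifying compatibility at the ramified prime; once that is settled, Theorem~\ref{thm:padua} follows from three already-established black boxes.
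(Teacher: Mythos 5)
Your proposal is correct and takes essentially the same route as the paper, which presents Theorem~\ref{thm:padua} as an immediate reformulation obtained by chaining Theorem~\ref{Faisant}, the Ramanujan--Wilton congruences, and Serre's congruence $\tau(p)\equiv N_p(x^3-x-1)-1\pmod{23}$. Your explicit check at the ramified prime $p=23$ (where $x^3-x-1\equiv(x-3)(x-10)^2$, so $N_{23}=2$) is a detail the paper leaves implicit, but it introduces no new method.
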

\noindent The observant reader will notice that $x^3-x-1$ is the characteristic polynomial
of the Padovan recurrence.
\par We point out that also Theorem 
\ref{Dutch} can be reformulated 
using a congruence observed by Ahlgren \cite[(5.4)]{Ahlgren}.
\begin{thm}
Let $r_{12}(n)$ be the number of representations of
$n$ as a sum of twelve squares. 
A prime $p$ divides $T_{p-1}$ if and only if 
$r_{12}(p)\equiv 4 \pmod*{11}.$
\end{thm}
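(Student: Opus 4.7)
The plan is to derive the statement by combining Theorem \ref{Dutch} with Ahlgren's congruence \cite[(5.4)]{Ahlgren} for $r_{12}(p)\pmod*{11}$, in a manner parallel to how Theorem \ref{thm:padua} is obtained from Theorem \ref{Faisant} via the Ramanujan--Wilton congruences. First, I would apply Theorem \ref{Dutch} to reduce the desired equivalence, for primes $p\notin\{11,19\}$, to showing
$$p=X^2+11Y^2 \text{ with } X,Y\in\mathbb Z \iff r_{12}(p)\equiv 4\pmod*{11}.$$

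The core step is then to invoke Ahlgren's congruence, which should give a four-case formula for $r_{12}(p)\pmod*{11}$ depending on the splitting behaviour of $p$ in the ring class field of the order $\mathbb Z[\sqrt{-11}]$ of discriminant $-44$. The four cases to distinguish are $p=11$; $\kronecker{-11}{p}=-1$; $p$ represented by the principal form $X^2+11Y^2$; and $p$ represented by one of the non-principal forms $3X^2\pm 2XY+4Y^2$ of the same discriminant. A direct inspection of Ahlgren's formula should show that the residue $4\pmod*{11}$ is attained in exactly one of these four cases, and that this case is precisely $p=X^2+11Y^2$. Combined with the reduction above, this yields the theorem for all $p\notin\{11,19\}$.

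The exceptional primes $p=11$ and $p=19$ are handled by direct calculation on both sides of the equivalence: one computes $T_{10}\pmod*{11}$ and $T_{18}\pmod*{19}$, and separately reads off $r_{12}(11)$ and $r_{12}(19)$ modulo $11$ from the Fourier coefficients of $\theta(q)^{12}$, verifying that in both cases the two sides agree. The main obstacle I anticipate is extracting the precise four-case form of Ahlgren's congruence from \cite{Ahlgren} and confirming that the residue $4$ is \emph{not} attained outside the principal form class; once this is in hand, the remainder of the argument is a routine matching of conditions, together with a handful of small numerical checks at the two exceptional primes.
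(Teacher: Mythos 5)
Your plan coincides with the paper's own (implicit) proof: the theorem is presented there purely as a reformulation of Theorem \ref{Dutch} via Ahlgren's congruence \cite[(5.4)]{Ahlgren}, exactly as you propose, and the four-case structure you anticipate is correct. Concretely, for odd $p$ one has $r_{12}(p)=8\sigma_5(p)+16\,a(p)$ with $a(n)$ the $n$-th coefficient of $\eta(2z)^{12}$; modulo $11$, $\sigma_5(p)=1+p^5\equiv 1+\kronecker{-11}{p}$, while $a(p)\equiv 2,\,-1,\,0$ according as $p$ is represented by the principal form $X^2+11Y^2$, by a non-principal form $3X^2\pm 2XY+4Y^2$ of discriminant $-44$, or satisfies $\kronecker{-11}{p}=-1$. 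This gives $r_{12}(p)\equiv 4\pmod*{11}$ precisely in the principal case (the other two residues both being $0$), so the generic part of your argument goes through as planned.

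The step that fails is the proposed verification at the exceptional prime $19$: the two sides do \emph{not} agree there. One has $T_{18}=19513=19\cdot 13\cdot 79$, so $19\mid T_{18}$; on the other hand $a(19)=836=4\cdot 11\cdot 19$ and $1+19^5\equiv 0\pmod*{11}$ (since $\kronecker{-11}{19}=-1$), whence $r_{12}(19)\equiv 0\not\equiv 4\pmod*{11}$. Thus the statement is actually false at $p=19$ and must exclude that prime, just as Theorem \ref{Dutch} does: $19$ is a genuine exception to the Evink--Helminck criterion (it is inert in $\mathbb{Q}(\sqrt{-11})$ yet divides $T_{18}$), not merely a ramified prime. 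Your method is the right one and in fact detects this defect; the only error in the proposal is the advance assertion that the checks at $11$ and $19$ will both confirm the equivalence. (At $p=11$ the check does succeed: $T_{10}=149\equiv 6\pmod*{11}$ and $r_{12}(11)=8\sigma_5(11)+16\cdot 540\equiv 2\pmod*{11}$, so both sides are false.)
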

Let $f(x)\in \mathbb Z[x]$ be a monic irreducible polyonmial.
Any rule for characterizing those primes $p$ for which $N_p(f)=\text{deg}(f)$ might be
called a \emph{higher reciprocity law}. In case $f$ is quadratic, the law of quadratic reciprocity leads to a characterization in terms of congruence classes. By class field
a characterization in terms of congruence classes 
exists if and only if the splitting field of $f$ over the
rationals is abelian (see, e.g.\,\cite[Chp.\,1]{HS}). 
An example is given by Theorem \ref{thmH} below.
In the non-abelian case sometimes there is a characterization 
involving Fourier coefficients of a modular form. 
Here our aim is to find
higher reciprocity laws involving linear ternary recurrence sequences. Theorem 
\ref{thm:padua} gives a nice demonstration of what we are after.
\par Results similar to Theorems \ref{Dutch} and \ref{Faisant}, but involving \emph{binary} recurrences 
$\{u_n\}_n$ are known
since Cauchy (1829) (see also Sun \cite{sun2} and
Williams and Hudson \cite{wuhu}). Typically they involve distinguishing between
$u_{(p-1)/3}$ if $p\equiv 1\pmod*{3}$ and $u_{(p+1)/3}$ if $p\equiv 2\pmod*{3}.$
It can also be $u_{(p-1)/4}$ and $u_{(p+1)/3},$ as, e.g., in Halter-Koch \cite{H-K}.
Further, the coefficients of the characteristic polynomial might be large. 
E.g., in \cite{wuhu} the sequence $u_0=2,~u_1=529,$ and $u_{n+2}=529u_{n+1}-40^3u_n$
plays a role. 
\par In this paper we provide more results 
in the same vein as Theorems \ref{Dutch} and \ref{Faisant}. 
All starting
terms and coefficients will be no larger than $5$ in absolute value and no distinction into
two cases is necessary.
We show
that Theorem \ref{Dutch} belongs to a family of
similar results, which we present in
Table \ref{tab:2}.  Likewise, the family for Theorem \ref{Faisant} is presented in Table \ref{tab:1}.
\par Our method of proof is by relating the ternary sequence divisibility by a prime $p,$ to the 
number of solutions of modulo $p$ of the characteristic equation 
(given
Theorem \ref{thm:padua} this becomes perhaps not as a surprise). This quantity on its
turn we relate using class field theory to the representation of primes by binary quadratic
forms. In the next two subsections we present our
results and gives some corollaries in the rest of the introduction.
\subsection{Numbers of solutions of cubic polynomials modulo primes}
\label{sec:npf}
The following result is due to Sun \cite{sun}, 
who proved it by 
an elementary method (taking about fifteen pages). 
Before tackling some relevant variants, 
we provide a short and simple reproof of his result 
using class field theory.
\begin{thm}
\label{thm3"}
	For arbitrary integers $a_1,a_2$ and $a_3,$ let $\{s_n\}$ be the third-order 
	recurrence sequence defined by 
	 $$s_0=3,~s_1=-a_1,~s_2=a_1^2-2a_2,~~s_{n+3}+a_1s_{n+2}+a_2s_{n+1}+a_3s_n=0 \quad (n\geq 0).$$
	Let $f(x)=x^3+a_1x^2+a_2x+a_3$. Given a prime $p,$ we 
	 let $N_p(f)$ denote the number of 
	 integers $0\le a<p$ for which $f(a) \equiv 0 \pmod*{p}$.
	 If $p \nmid \rm 6\,disc(f)(a_1^2-3a_2)$, then
	 $$
	 N_p(f)=
	 \begin{cases}
	 \phantom{-}3\ &\textrm{if}\quad s_{p+1}\equiv a_1^2-2a_2 \pmod*{p};\\
	 \phantom{-}0\ &\textrm{if}\quad  s_{p+1}\equiv a_2 \pmod*{p};\\
	 \phantom{-}1\ &\textrm{otherwise.}
	 \end{cases}
	 $$
\end{thm}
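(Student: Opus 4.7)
The plan is to identify $s_n$ with the power sum $p_n := \alpha_1^n + \alpha_2^n + \alpha_3^n$, where $\alpha_1,\alpha_2,\alpha_3$ are the roots of $f$, and then reduce the evaluation of $s_{p+1}\pmod*{p}$ to a Frobenius computation in the splitting field $K$ of $f$ over $\mathbb Q$. The identification is routine: Newton's identities give $p_0=3$, $p_1=-a_1$, $p_2=a_1^2-2a_2$, and multiplying $\alpha_i^n f(\alpha_i)=0$ and summing over $i$ yields the stated recurrence. Since $p\nmid\mathrm{disc}(f)$, the prime $p$ is unramified in $K$, the reduction $\bar f\pmod*{p}$ is separable, and its factorization type over $\mathbb F_p$ mirrors the cycle type of any Frobenius element $\sigma=\mathrm{Frob}_{\mathfrak p}$ in $\mathrm{Gal}(K/\mathbb Q)\hookrightarrow S_3$, where $\mathfrak p$ is a prime of $K$ above $p$. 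The three cases of the theorem correspond respectively to $\sigma$ being the identity ($N_p(f)=3$), a $3$-cycle ($N_p(f)=0$), or a transposition ($N_p(f)=1$).

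The heart of the argument is the congruence
\[
s_{p+1}=\sum_{i=1}^{3}\alpha_i\cdot\alpha_i^{p}\equiv\sum_{i=1}^{3}\alpha_i\,\sigma(\alpha_i)\pmod*{\mathfrak p},
\]
coming from the defining property $\alpha_i^p\equiv\sigma(\alpha_i)\pmod*{\mathfrak p}$ of the Frobenius. A direct evaluation of the right-hand side gives: if $\sigma=\mathrm{id}$ it equals $\sum_i\alpha_i^2=a_1^2-2a_2$; if $\sigma$ is a $3$-cycle it equals $e_2(\alpha)=a_2$; and if $\sigma$ is the transposition swapping $\alpha_i$ and $\alpha_j$ while fixing $\alpha_k$ it equals $2\alpha_i\alpha_j+\alpha_k^2$. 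Because $s_{p+1}$ is a rational integer, each such congruence mod $\mathfrak p$ descends to a congruence mod $p$.

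The final step is to verify that the three possible values of $s_{p+1}\pmod*{p}$ are pairwise distinct, which is exactly where the extra hypotheses come in. The identity and $3$-cycle values differ by $a_1^2-3a_2$, nonzero mod $p$ by assumption. In the transposition case, subtracting $s_2$ yields $-(\alpha_i-\alpha_j)^2$ and subtracting $a_2$ yields $(\alpha_i-\alpha_k)(\alpha_j-\alpha_k)$; both are units at $\mathfrak p$ since $\mathrm{disc}(f)=\prod_{r<s}(\alpha_r-\alpha_s)^2$ is coprime to $p$.

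The main subtlety I expect to manage is the transposition case: the expression $2\alpha_i\alpha_j+\alpha_k^2$ depends on the labelling of the roots, yet $s_{p+1}$ is a rational integer. The resolution is that different choices of $\mathfrak p\mid p$ yield conjugate Frobenii, producing the three asymmetric expressions, all of which must reduce to the same rational integer mod $p$; this is automatic once the congruence is set up in $\mathcal O_K/\mathfrak p$. The technical factor $p\nmid 6$ in the hypothesis is there to exclude small-characteristic degeneracies in the reductions carried out along the way.
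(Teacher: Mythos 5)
Your proposal is correct, and it follows the paper's overall strategy: identify $s_n$ with the power sums of the roots, use the Frobenius congruence $\alpha_i^{p}\equiv\sigma(\alpha_i)\pmod*{\mathfrak p}$ to evaluate $s_{p+1}$ according to the cycle type of $\sigma$, and match that cycle type with $N_p(f)$ (the paper does this matching via a lemma on the order of the decomposition group, you via the Dedekind correspondence between factorization type of $f$ modulo $p$ and the cycle type of Frobenius on the roots -- essentially the same content). Where the two arguments genuinely diverge is in separating the transposition case from the other two. The paper argues by contradiction: assuming the Frobenius is a transposition, it passes to the depressed cubic with roots $y_i=3\alpha_i+a_1$, solves for $y_3$, and derives $b^2-4(a_1^2-3a_2)^3\equiv 0$ modulo $\mathfrak B$, impossible since that quantity equals $-27\,{\rm disc}(f)$; this computation is where the factor $6$ in the hypothesis is consumed. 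You instead evaluate the transposition value directly as $2\alpha_i\alpha_j+\alpha_k^2$ and factor its differences from the other two candidate values as $-(\alpha_i-\alpha_j)^2$ and $(\alpha_i-\alpha_k)(\alpha_j-\alpha_k)$, both units at $\mathfrak p$ because $p\nmid{\rm disc}(f)$. This is shorter, symmetric in the roots, and in fact shows that the hypothesis $p\nmid 6$ is superfluous for this theorem -- only $p\nmid{\rm disc}(f)(a_1^2-3a_2)$ is used, so your closing remark crediting the factor $6$ with excluding degeneracies undersells your own argument. One point to make explicit in a final write-up: the theorem does not assume $f$ irreducible, so you should say that the Dedekind correspondence needs only separability of $f$ modulo $p$, which $p\nmid{\rm disc}(f)$ guarantees; your formulation with ${\rm Gal}(K/\mathbb Q)\hookrightarrow S_3$ already accommodates reducible $f$, whereas the paper's lemma is stated only for irreducible $f$.
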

Unfortunately this result does not cover the Padovan and Tribonacci sequences. 
For this reason we will establish two analogues 
that do cover these sequences.
These results are by no means exhaustive; 
using our class field theory based approach it is easy to establish
similar results.
\begin{thm}\label{thm1"}
	Let  $a_2$ and $a_3$ be integers and 
	let $\{u_n\}$ be the third-order recurrence sequence defined by 
	\begin{equation}
	\label{eq:u}
	u_0=0,~u_1=-a_2,~u_2=-a_3,~~u_{n+3}+a_2u_{n+1}+a_3u_n=0 \quad (n\geq 0).
	\end{equation}
	Let $f=x^3+a_2x+a_3$ and $d=9a_3^2-4a_2^3$. 
	Put $D={\rm disc}(f).$
	Let 
	\begin{equation}
	   \label{long} 
	p \nmid 6Da_2a_3((20a_2^3a_3+27a_2^3+9a_2d)^2-d(31a_2^2+d)^2),
	\end{equation}
	be a prime. Then 
	$$
	N_p(f)=
	\begin{cases}
	\phantom{-}3\ &\textrm{if}\quad Du_{p-1}^2 \equiv 0 \pmod*{p};\\
	\phantom{-}0\ &\textrm{if}\quad  Du_{p-1}^2\equiv a_2^4 \pmod*{p};\\
	\phantom{-}1\ &\textrm{otherwise.} 
	\end{cases}
	$$
\end{thm}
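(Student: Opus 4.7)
The plan is to derive a closed form $u_n=\sum_{\alpha}\alpha^{n+3}/f'(\alpha)$, the sum running over the three roots of $f$ — established either by Cramer's rule on the system expressing $u_0,u_1,u_2$ in the basis $\alpha^n,\beta^n,\gamma^n$, or by partial-fractioning the generating function $1/(1+a_2x^2+a_3x^3)$ — and then to reduce $u_{p-1}$ modulo $p$ via the Frobenius substitution. Fix a prime $\mathfrak p$ above $p$ in the splitting field $K=\mathbb Q(\alpha,\beta,\gamma)$ with Frobenius $\sigma$; then $\alpha^p\equiv\sigma(\alpha)\pmod*{\mathfrak p}$ gives
\[
u_{p-1}\equiv\sum_{\alpha}\frac{\sigma(\alpha)\,\alpha^2}{f'(\alpha)}\pmod*{\mathfrak p}.
\]
The theorem is then a three-way case analysis on the cycle type of $\sigma\in\mathrm{Gal}(K/\mathbb Q)\hookrightarrow S_3$, which is the standard link to $N_p(f)$.

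If $N_p(f)=3$ then $\sigma=\mathrm{id}$ and the sum equals $\sum\alpha^3/f'(\alpha)=u_0=0$, so $Du_{p-1}^2\equiv 0\pmod*{p}$. If $N_p(f)=0$ then $\sigma$ is a $3$-cycle; denote the sum above by $T$ and the analogous sum for $\sigma^{-1}$ by $T'$. The relation $\alpha+\beta+\gamma=0$ gives $T+T'=-\sum\alpha^3/f'(\alpha)=0$, while the classical $\beta-\gamma=\sqrt D/f'(\alpha)$ (with the sign convention $\sqrt D=(\alpha-\beta)(\alpha-\gamma)(\beta-\gamma)$) together with its cyclic analogues yields $T-T'=\sqrt D\cdot S$ where $S:=\sum\alpha^2/f'(\alpha)^2$. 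Hence $T=\tfrac12\sqrt D\,S$, so $Du_{p-1}^2\equiv D^2S^2/4\pmod*{p}$, and the theorem reduces to the symbolic identity $DS=2a_2^2$. This identity I would establish by rationalizing with the factorization $f'(\beta)f'(\gamma)=f'(\alpha)(3\alpha^2+4a_2)$ (coming from $f(x)=(x-\alpha)(x^2+\alpha x+a_2+\alpha^2)$) to write $D^2S=\sum_{\alpha}\alpha^2f'(\alpha)^2(3\alpha^2+4a_2)^2$, reducing the summand modulo $f$ to $D(-a_2\alpha^2+3a_3\alpha)$, and concluding via $\sum\alpha^2=-2a_2$ and $\sum\alpha=0$.

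If $N_p(f)=1$ then $\sigma$ fixes the unique $\mathbb F_p$-root $\alpha_1$ of $f$ and swaps the other two; combining the two swapped terms via $\beta+\gamma=-\alpha_1$, $\beta\gamma=a_2+\alpha_1^2$ and the same factorization $f'(\beta)f'(\gamma)=f'(\alpha_1)(3\alpha_1^2+4a_2)$ collapses them to $a_3/f'(\alpha_1)$, so $u_{p-1}\equiv(\alpha_1^3+a_3)/f'(\alpha_1)=-a_2\alpha_1/f'(\alpha_1)\pmod*{\mathfrak p}$. The hypothesis $p\nmid a_2a_3D$ then gives $Du_{p-1}^2\not\equiv 0\pmod*{p}$; and for $Du_{p-1}^2\not\equiv a_2^4\pmod*{p}$, reducing $f'(\alpha_1)^2\equiv a_2^2-3a_2\alpha_1^2-9a_3\alpha_1\pmod*{p}$ via $f(\alpha_1)\equiv 0$ turns the forbidden equation into $Q(\alpha_1)\equiv 0\pmod*{p}$ with the explicit quadratic $Q(x)=(D+3a_2^3)x^2+9a_2^2a_3\,x-a_2^4$, and the long factor in the hypothesis \eqref{long} is precisely what ensures $\mathrm{Res}_x(f,Q)\not\equiv 0\pmod*{p}$, i.e.\ $f$ and $Q$ share no root mod $p$. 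The main obstacle is the symbolic identity $DS=2a_2^2$: once the rationalization $f'(\beta)f'(\gamma)=f'(\alpha)(3\alpha^2+4a_2)$ is in hand the verification is a routine modular computation, but without this trick the sum $\sum\alpha^2/f'(\alpha)^2$ looks unwieldy.
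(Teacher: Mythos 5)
Your proposal is correct and follows essentially the same route as the paper's proof: the Binet formula (yours in the equivalent form $u_n=\sum_\alpha\alpha^{n+3}/f'(\alpha)$, the paper's with numerators $\gamma-\beta$, etc.), reduction of $u_{p-1}$ modulo a prime above $p$ via the Frobenius substitution, and a case analysis on the cycle type, ending in the transposition case with a quadratic in the fixed root whose incompatibility with $f\equiv 0$ is exactly what the long factor in \eqref{long} guarantees. The only real difference is in the $3$-cycle case, where the paper evaluates $(\gamma-\beta)\alpha^2\beta+(\alpha-\gamma)\beta^2\gamma+(\beta-\alpha)\gamma^2\alpha=\pm a_2^2$ directly as a symmetric function, whereas you reach the same value via the symmetrization $T+T'=0$, $T-T'=\sqrt{D}\,S$ together with the identity $DS=2a_2^2$, which I have checked is correct (e.g.\ from $DS=-\sum_\alpha\alpha^2(3\alpha^2+4a_2)=-(3p_4+4a_2p_2)$ with $p_2=-2a_2$, $p_4=2a_2^2$).
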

Using the same approach as in the proof of Theorem \ref{thm3"}, we deduce the following result

\begin{thm}\label{thm2"}
		Let  $a_1, a_2$ and $a_3$ be integers and 
	let $\{U_n\}$ be the third-order recurrence sequence defined by 
	$$U_0=0,~U_1=1,~U_2=-a_1,~~U_{n+3}+a_1U_{n+2}+a_2U_{n+1}+a_3U_n=0 \quad (n\geq 0).$$
	Let $f=x^3+a_1x^2+a_2x+a_3$. 	Put $D={\rm disc}(f).$ Given
	a prime $p$ we have
	$$
	N_p(f)=
	\begin{cases}
	\phantom{-}3\ &\textrm{if}\quad  DU_{p-1}^2 \equiv 0 \pmod*{p};\\
	\phantom{-}0\ &\textrm{if}\quad   DU_{p-1}^2\equiv (a_1^2-3a_2)^2 \pmod*{p};\\
	\phantom{-}1\ &\textrm{otherwise,}
	\end{cases}
	$$
	with at most finitely many exceptions.
\end{thm}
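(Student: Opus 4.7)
The plan is to mirror the strategy used for Theorem \ref{thm3"}: express $U_n$ in closed form via the roots of $f$, and then read off $U_{p-1}\bmod p$ from the action of Frobenius. Writing $\alpha,\beta,\gamma\in\overline{\mathbb Q}$ for the roots of $f$, the standard Vandermonde/partial-fraction identities $\sum_i \alpha_i^k/f'(\alpha_i)=\delta_{k,2}$ for $k=0,1,2$ (together with the fact that $V_n=\sum_i \alpha_i^n/f'(\alpha_i)$ satisfies the recurrence) give the closed form
$$U_n=\frac{\alpha^{n+1}}{f'(\alpha)}+\frac{\beta^{n+1}}{f'(\beta)}+\frac{\gamma^{n+1}}{f'(\gamma)}.$$
Excluding the finitely many primes dividing $D\cdot a_3$, the roots are distinct and nonzero mod $p$, and the Frobenius $\phi:x\mapsto x^p$ permutes them with cycle type $(1^3)$, $(3)$, or $(2,1)$ corresponding precisely to $N_p(f)=3,\,0,\,1$.

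When $N_p(f)=3$, every $\alpha_i$ satisfies $\alpha_i^{p-1}\equiv 1$, so the formula collapses to $U_{p-1}\equiv\sum_i \alpha_i/f'(\alpha_i)=0\pmod{p}$, giving $DU_{p-1}^2\equiv 0$. When $N_p(f)=0$, $\phi$ acts as a $3$-cycle; substituting $\alpha^{p-1}=\beta/\alpha$, etc., and combining over the common denominator $(\alpha-\beta)(\beta-\gamma)(\gamma-\alpha)$ (a square root of $D$), the numerator telescopes to $\alpha(\alpha-\beta)+\beta(\beta-\gamma)+\gamma(\gamma-\alpha)=(\alpha^2+\beta^2+\gamma^2)-(\alpha\beta+\beta\gamma+\gamma\alpha)=a_1^2-3a_2$, whence $DU_{p-1}^2\equiv(a_1^2-3a_2)^2\pmod{p}$.

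The case $N_p(f)=1$ will be the delicate one. Here $\phi$ fixes the unique $\mathbb F_p$-root $\alpha$ and swaps $\beta,\gamma\in\mathbb F_{p^2}\setminus\mathbb F_p$. Using $\beta^{p-1}=\gamma/\beta$ and $\gamma^{p-1}=\beta/\gamma$ and carrying out the analogous simplification (with numerator $(\beta-\gamma)(2\alpha-\beta-\gamma)=(\beta-\gamma)(3\alpha+a_1)$) yields
$$U_{p-1}\equiv\frac{3\alpha+a_1}{f'(\alpha)}\pmod{p},\qquad DU_{p-1}^2\equiv(\beta-\gamma)^2(3\alpha+a_1)^2\pmod{p}.$$
Since $\beta-\gamma\notin\mathbb F_p$, the factor $(\beta-\gamma)^2$ is a nonzero non-square in $\mathbb F_p^{\times}$, so $DU_{p-1}^2$ is either zero (exactly when $3\alpha+a_1\equiv 0$, which forces $p\mid 27 f(-a_1/3)=2a_1^3-9a_1a_2+27a_3$) or a nonzero non-square; in particular, outside the finite set of primes dividing $D\cdot a_3\cdot(a_1^2-3a_2)\cdot(2a_1^3-9a_1a_2+27a_3)$ it is neither $0$ nor the nonzero square $(a_1^2-3a_2)^2$, yielding the stated trichotomy.

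The main obstacle is the algebraic bookkeeping of the transposition case: both executing the simplification to the clean form $(3\alpha+a_1)/f'(\alpha)$, and then extracting the non-square versus square dichotomy that cleanly isolates $N_p(f)=1$ from the other two cases, while keeping track of the exceptional primes so that "with at most finitely many exceptions" is genuinely all that must be excluded.
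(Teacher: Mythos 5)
Your proof is correct, and it fills a gap the paper leaves open: the authors only assert that Theorem \ref{thm2"} follows ``using the same approach as in the proof of Theorem \ref{thm3"}'' and never write the argument out. Your overall skeleton is indeed the same as theirs (Binet formula for $U_n$ in terms of the roots, then a case analysis on the Frobenius/decomposition group, whose cycle type $(1^3)$, $(3)$, $(2,1)$ matches $N_p(f)=3,0,1$ via Dedekind), and your closed form $U_n=\sum_i\alpha_i^{n+1}/f'(\alpha_i)$ is equivalent to the Cram\'er-rule Binet formula they derive in the proof of Theorem \ref{thm1"}. Where you genuinely diverge is in the transposition case: the paper's template handles $\#D(\mathfrak B)=2$ only in the converse direction, by eliminating the fixed root and deriving a contradiction with $p\nmid\operatorname{disc}(f)$ (or, in Theorem \ref{thm1"}, with the unwieldy auxiliary condition \eqref{long}), whereas you compute the value outright, $DU_{p-1}^2\equiv(\beta-\gamma)^2(3\alpha+a_1)^2$, and observe that $(\beta-\gamma)^2$ is a quadratic non-residue mod $p$, so the three cases output $0$, a nonzero square, and a nonzero non-square respectively. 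This quadratic-residue dichotomy makes the trichotomy self-evidently exclusive and yields an explicit, cleaner exceptional set $p\mid 6Da_3(a_1^2-3a_2)(2a_1^3-9a_1a_2+27a_3)$, which is more information than the paper's ``at most finitely many exceptions.'' The only caveats, which apply equally to the paper's statement, are that $f$ must be irreducible (it is tacitly assumed throughout, e.g.\ in Lemma \ref{lem1"}) and that $a_1^2-3a_2\neq 0$ is needed for the second and first cases to be distinguishable at more than finitely many primes; your exclusion of $p\mid(a_1^2-3a_2)$ mirrors the hypothesis of Theorem \ref{thm3"} and handles this.
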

Saito \cite{Saito} gave similar results for polynomials $f$ of arbitrary degree, however restricting to
the case where $N_p(f)=\text{deg}(f).$ In the cubic case Sun \cite{sun3} also related $N_p(f)$ to a sum involving the binomial coefficient $\binom{3k}{k}.$
\par To the uninitiated the problem of determining $N_p(f)$ might even seem somewhat 
recreational, however we like to point out the following quote of Dalawat \cite[p.\,32]{Dalawat}: 
``The Langlands programme at its most basic level, is a search of patterns in the sequence
$N_p(f)$ for varying primes $p$ and a fixed but arbitrary polynomial with rational coefficients.".
Our main motivation in this paper is to understand for which
binary forms
$X^2+nY^2$ with $n\ge 1,$ similar results to those
of Evink-Helminck and Faisant exist.
 We will show
that this problem is closely related to the 
question of for which 
$n$ the class number of $\mathbb Q(\sqrt{-n})$ is
$1$ or $3$.  More precisely, we are looking for the monic irreducible polynomials $f$ of degree $3$ with integer coefficients, such that,  with at most finitely many
exceptions, all the primes that split completely over the splitting field of $f$ are represented by the same principal form of discriminant $-D$. This 
is a problem with a 
long tradition and has been studied by many mathematicians. In 1827, Jacobi  showed that for all prime greater than $3$ such that $p \nmid 243$, $N_p(x^3-3)=3$ if and only if $p$ is represented by the principal form $X^2+XY+61Y^2$. Gauss (published in 1876 in his
Collected Works) showed that for 
every
prime $p \nmid 2\cdot 3\cdot 27$ we have $N_p(x^3-2)=3$ if and only if $p$ can be written in the form $p=X^2+27Y^2,$ 
cf.\,Cox \cite[Theorem 4.15]{Cox}.  
In 1868, Kronecker proved that  
$p \nmid 2\cdot 3\cdot 31$ 
can be written in the form $p=X^2+31Y^2$ if and only if
$N_p(x^3+x+1)=3.$ In 1991, Williams and Hudson in \cite{wuhu} found $25$  monic irreducible polynomials of degree $3$ with integer coefficient $f$ such that, with
 finitely many exceptions, the primes $p$ splitting completely over the splitting field of $f$ are represented by the same principal form of discriminant $-D$.
 \par A somewhat related problem was studied by
 Ciolan et al.\,\cite{clum}, who showed that for a large
 class of ternary sequences $\{U_n\}$, including the Tribonacci sequence, one has
 $$\#\{n\le x:U_n=X^2+nY^2\text{~for~some~integers~}X,Y\}
 \ll \frac{x}{(\log x)^{0.05}}.$$
 \par A lot of (historical) material on primes of the form $X^2+nY^2$ can 
 be found in the 
 beautiful book by Cox \cite{Cox}. For more on the characterization of these
 primes using Fourier coefficients of modular forms, see. e.g., the book by Hiramatsu
 and Saito \cite{HS}.
\subsection{Connections with class field theory}
\label{sec:CFT}
In this paper, we provide a large class of monic irreducible polynomials $f$ of degree $3$ with integer coefficients, such that $N_p(f)=3$ if and only if $p$ is represented by the same principal form of discriminant $-D,$ with the exception of finitely many $p$. 
To do so, we give in the 
following theorem a description of 
those values of $n$ for which the splitting field of $f$ is the 
ray class field of $\mathbb{Q}(\sqrt{-D})$ modulo its 
conductor 
(which is either $\langle 1\rangle$ or $\langle 2\rangle,$ due to
the requirement that the odd part of $\text{disc}(f)$ is assumed
to be square-free).
\begin{thm}\label{thm1}
Suppose that $f(x)= x^3+ax^2+bx+c\in \mathbb{Z}[x]$ is
irreducible 
with ${\rm disc}(f)=-4^tn,$ where $t \geq 0,$ and $n$ 
is a positive odd square-free integer. 
Let 
$L$ be the splitting field of $f$ and $\mathfrak{F}$ 
the conductor of $L/K,$ where $K= \mathbb{Q}(\sqrt{-n}).$ 
We denote by $M$ the number field $\mathbb{Q}(\alpha)$ 
with $\alpha$ the unique real root of $f,$
and 
by $d_M$ its discriminant. The conductor $\mathfrak{F}$ is given as follows:
\begin{itemize}
		\item If $2 \mid d_M$ and $-n \equiv 1 \pmod*{8}$, then $\mathfrak{F}= \mathfrak{P}_1'\mathfrak{P}_2'$, where $\mathfrak{P}_1',\mathfrak{P}_2'$ are the prime ideals of $\mathfrak{O}_K$ above $2$. 
		\item If $2 \mid d_M$ and $-n \equiv 5 \pmod*{8}$, then $\mathfrak{F}= \langle 2 \rangle,$ where   $\langle 2 \rangle$ is a prime ideal in $\mathfrak{O}_K$. 
	\item If $\langle 2 \rangle=\mathfrak{P}_1^2 \,\mathfrak{P}_2$ and $-n \equiv 3 \pmod*{4}$, then $\mathfrak{F}=\langle 1 \rangle,$ where  $\mathfrak{P}_1, \mathfrak{P}_2$ are the primes ideals of $\mathfrak{O}_M$. 
		\item If $\langle 2 \rangle=\mathfrak{P}^3 $ and $-n \equiv 3\pmod*{4}$, then $\mathfrak{F}= \mathfrak{P}_1$, where $\mathfrak{P}_1$ is a prime ideal of $\mathfrak{O}_K$ above $2$ and $\mathfrak{P}$ a prime ideal of $\mathfrak{O}_M$.  
		\item If $2 \nmid d_M$, then $\mathfrak{F}= \langle 1 \rangle.$
		
	\end{itemize}
If $h_K=1$ in the second case, or $h_K=3$ in any of the other cases, then
$L$ is the ray class field of $K$  modulo the corresponding conductor $\mathfrak{F}$.
\end{thm}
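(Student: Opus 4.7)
The plan is to compute the conductor $\mathfrak{F}$ of the cyclic cubic extension $L/K$ prime by prime, and then to read off $|\mathrm{Cl}_K(\mathfrak{F})|$ so as to compare with $[L:K]=3$. Since $\mathrm{disc}(f)=-4^t n<0$, the polynomial $f$ has one real and two complex roots, $\mathrm{Gal}(L/\mathbb{Q})=S_3$, and $K=\mathbb{Q}(\sqrt{\mathrm{disc}(f)})=\mathbb{Q}(\sqrt{-n})$ is its unique quadratic subfield (the fixed field of $A_3$), so $L/K$ is indeed cyclic of degree $3$. First I would show that $\mathfrak{F}$ is supported only at primes above $2$: because $[\mathfrak{O}_M:\mathbb{Z}[\alpha]]^2$ divides $-4^t n$ and $n$ is odd squarefree, the odd part of this index must be $1$, hence the odd part of $|d_M|$ is $n$, and similarly for $|d_K|$. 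Combining the $S_3$ conductor--discriminant identity $|d_L|=|d_K|\cdot|d_M|^2$ with the tower formula $|d_L|=\mathrm{N}_{K/\mathbb{Q}}(\mathfrak{F})^2\,|d_K|^3$ then yields $\mathrm{N}_{K/\mathbb{Q}}(\mathfrak{F})=|d_M|/|d_K|$, a power of $2$.

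The heart of the proof is the local analysis at $p=2$. Any inertia subgroup $I\subset S_3$ at a prime above $2$ must admit a filtration $I\supset G_1$ with $G_1$ a normal $2$-subgroup of $I$ and $I/G_1$ cyclic of order coprime to $2$; since $S_3$ has no normal subgroup of order $2$ and is not itself cyclic, this forces $|I|\in\{1,2,3\}$. I would then match the theorem's data to the inertia. If $2\nmid d_M$, then $I=1$ and $\mathfrak{F}=\langle 1\rangle$ (Case 5). If $2\mid d_M$ while $2$ is unramified in $K$ (i.e.\ $-n\equiv 1$ or $5\pmod*{8}$), then $I\subset A_3$ is nontrivial, so $I=A_3$, making $L/K$ tamely totally ramified of degree $3$ at each prime of $K$ above $2$ with local conductor exponent $1$; summing over these primes yields $\mathfrak{F}=\mathfrak{P}'_1\mathfrak{P}'_2$ in Case 1 and $\mathfrak{F}=\langle 2\rangle$ in Case 2. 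If $-n\equiv 3\pmod*{4}$, then $2$ ramifies in $K$ so $I$ must contain a transposition; as $|I|\le 3$ this forces $I=\langle\tau\rangle$, which has trivial intersection with $\mathrm{Gal}(L/K)=A_3$, so $L/K$ is unramified above $2$ and $\mathfrak{F}=\langle 1\rangle$ (Case 3). A short double-coset computation shows that $I=\langle\tau\rangle$ forces the factorization of $2$ in $\mathfrak{O}_M$ to be $\mathfrak{P}_1^2\mathfrak{P}_2$; consequently Case 4 is vacuous and its assertion holds automatically.

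For the final statement I would use the standard ray-class exact sequence
\[ 1 \to \mathfrak{O}_K^\times/U^\mathfrak{F} \to (\mathfrak{O}_K/\mathfrak{F})^\times \to \mathrm{Cl}_K(\mathfrak{F}) \to \mathrm{Cl}_K \to 1, \]
where $U^\mathfrak{F}$ is the subgroup of units congruent to $1$ modulo $\mathfrak{F}$. For imaginary quadratic $K$ other than $\mathbb{Q}(\sqrt{-1})$ and $\mathbb{Q}(\sqrt{-3})$ one has $\mathfrak{O}_K^\times=\{\pm1\}$; in every non-vacuous case above $2\in\mathfrak{F}$ (or $\mathfrak{F}$ is trivial), so $-1\equiv 1\pmod*{\mathfrak{F}}$ and the left-hand quotient is trivial. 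A direct computation of $(\mathfrak{O}_K/\mathfrak{F})^\times$---trivial when $\mathfrak{F}\in\{\langle 1\rangle,\mathfrak{P}'_1\mathfrak{P}'_2\}$ because $\mathbb{F}_2^\times=1$, and equal to $\mathbb{F}_4^\times\cong\mathbb{Z}/3\mathbb{Z}$ when $\mathfrak{F}=\langle 2\rangle$ is inert---then yields $|\mathrm{Cl}_K(\mathfrak{F})|=h_K$ in Cases 1, 3, 5 and $3h_K$ in Case 2. Under the stated hypotheses this order equals $3$, so the ray class field of $K$ modulo $\mathfrak{F}$ has degree $3$ over $K$; since $L/K$ is an abelian cubic extension whose conductor divides $\mathfrak{F}$, it embeds in this ray class field and therefore equals it. The main obstacle throughout is the careful inertia bookkeeping at $p=2$, notably confirming that the five cases correctly partition the global data into the three admissible inertia types inside $S_3$ and that Case 4 is genuinely vacuous.
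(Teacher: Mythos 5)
Your proof is correct in substance but follows a genuinely different route from the paper's. The paper localizes the conductor at $2$ by hand: it rules out ramification of $L/K$ at odd primes using multiplicativity of ramification indices together with the different--discriminant relation, and then treats the five cases via ad hoc ramification arguments; you instead get the support of $\mathfrak{F}$ at $2$ in one stroke from the conductor--discriminant identities $|d_L|=|d_K|\,|d_M|^2=N_{K/\mathbb{Q}}(\mathfrak{F})^2|d_K|^3$, and then classify the possible inertia subgroups at $2$ inside $S_3$ using the normal wild part with cyclic tame quotient. This buys you something the paper misses: since $|I|\in\{1,2,3\}$, the configuration $\langle 2\rangle=\mathfrak{P}^3$ in $\mathcal{O}_M$ forces $I=A_3$, hence $2$ unramified in $K$ and $-n\equiv 1\pmod{4}$, so the fourth case of the theorem is indeed vacuous. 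The paper's own argument in that case derives $e(\mathfrak{B}\mid 2)=6$ (it shows $\mathfrak{P}$ must ramify in $L$ while $2$ is already totally ramified in $M$), which is exactly the impossible inertia type; your analysis turns that unnoticed contradiction into the correct conclusion that the case never occurs. The final ray-class-field step is essentially identical in the two proofs: your exact sequence for $\mathrm{Cl}_K(\mathfrak{F})$ is the paper's formula $h_{\mathfrak{F}}=h_KN\mathfrak{F}\prod_{\mathfrak{P}\mid\mathfrak{F}}(1-1/N\mathfrak{P})/(U:U_{\mathfrak{F}})$, and both conclude by comparing $[H_{\mathfrak{F}}:K]=3=[L:K]$ with $L\subseteq H_{\mathfrak{F}}$.

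One small point that you (like the paper) leave dangling: your triviality of $\mathcal{O}_K^\times/U^{\mathfrak{F}}$ uses $\mathcal{O}_K^\times=\{\pm1\}$, which fails for $K=\mathbb{Q}(\sqrt{-3})$; since $-3\equiv 5\pmod{8}$ and $h_{\mathbb{Q}(\sqrt{-3})}=1$, the exceptional field is not excluded by the hypotheses of Case 2. It is easily dismissed --- in Case 2 your tame inertia computation gives $|d_M|=4n$, and there is no cubic field of discriminant $-12$ by the Minkowski bound --- but a sentence to that effect should be added. The claim ``$2\nmid d_M$ implies $I=1$'' also deserves its one-line justification (every conjugate of $I$ would have to lie in $\mathrm{Gal}(L/M)$, whose normal core in $S_3$ is trivial), though it is correct as stated.
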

The positive odd square-free integers $n$ for which $h_K=1,$ respectively $3$ are precisely
\begin{itemize}
\item $1,2,3,7,11,19,67,49,163,$ respectively
\item $23,31,59,83,107,139,211,283,307, 331,379,499,547,643,883,907$ 
\end{itemize}
The first assertion is the celebrated 
Baker-Heegner-Stark theorem, cf.\,Oesterl\'e \cite{oester}, the
second follows using Sage math software and  the work of 
Watkins \cite{Wa}. 
\par We remark that the method of proof of 
Theorem \ref{thm1} also works if
we fix a prime $p$ and 
consider irreducible polynomials $f$ of degree $3$ with
$\text{disc}(f)=-p^{2t}n,$ where $t \geq 0,$ and $n$ 
is a positive  square-free integer coprime to $p.$
\par If $f$ is as in 
Theorem \ref{thm1}, then $N_p(f)=3$  if and only if $p$ splits completely over the splitting field of $f$ (see \cite[Corollary 4.39]{MB}). 
If we are in one of the five cases of
Theorem \ref{thm1} a more explicit description of these completely splitting primes $p$ can be given.
\begin{thm}\label{thm3}
	Assume that we are in one of the five cases
	of Theorem \ref{thm1}. Then the following assertions are equivalent:
	\begin{itemize}
		\item $p$ splits completely over $L$.
		\item $\kronecker{-n}{p}=1$ and $f\equiv 0 \pmod*{p}$ has a solution in $\mathbb{Z}/p\mathbb{Z}$.
		\item $mp=X^2+nY^2,$ with $X,Y \in \mathbb{Z}.$
		\end{itemize}
Here $m=4$ if  $2 \nmid d_M$, $-n \equiv 5 \pmod*{8}$ and $h_K=3,$
 and $m=1$ otherwise.
\end{thm}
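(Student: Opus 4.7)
The plan is to establish $(1)\Leftrightarrow(2)$ by a Galois-theoretic argument over $\mathbb{Q}$, and $(1)\Leftrightarrow(3)$ by invoking the class-field-theoretic description of $L$ from Theorem~\ref{thm1} and translating it into representations by binary quadratic forms.

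For $(1)\Leftrightarrow(2)$, I would use that $\text{Gal}(L/\mathbb{Q})\cong S_3$, whose unique index-$2$ subgroup $A_3$ fixes $K=\mathbb{Q}(\sqrt{\text{disc}(f)})=\mathbb{Q}(\sqrt{-n})$. For $p$ unramified in $L$, the cycle type of $\text{Frob}_p$ in $S_3$ equals the factorization type of $f\bmod p$, while $\text{Frob}_p\in A_3$ iff $\kronecker{-n}{p}=1$. The hypotheses in~(2) therefore force $\text{Frob}_p$ to have a fixed point (not a $3$-cycle) and to lie in $A_3$ (not a transposition), leaving only $\text{Frob}_p=\text{id}$, which is complete splitting in $L$. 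The converse is immediate.

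For $(1)\Leftrightarrow(3)$, Theorem~\ref{thm1} identifies $L/K$ with the ray class field of $K$ modulo $\mathfrak{F}$, so $p$ splits completely in $L$ iff $p$ splits in $K$ and the prime $\mathfrak{p}$ above $p$ is trivial in $\text{Cl}_{\mathfrak{F}}(K)$, i.e., $\mathfrak{p}=\langle\alpha\rangle$ with $\alpha$ congruent modulo $\mathfrak{F}$ to a unit of $\mathfrak{O}_K$. I would then check each of the five cases. In cases~$3$ and $4$, and in case~$5$ with $-n\equiv 3\pmod 4$, we have $\mathfrak{O}_K=\mathbb{Z}[\sqrt{-n}]$ and the ray class field degenerates to the Hilbert class field (immediate when $\mathfrak{F}=\langle 1\rangle$, and in case~$4$ because $(\mathfrak{O}_K/\mathfrak{P}_1)^*=\mathbb{F}_2^*$ is trivial); writing $\alpha=X+Y\sqrt{-n}$ gives $p=X^2+nY^2$ and $m=1$. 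In cases~$1$ and $2$, and in case~$5$ with $-n\equiv 1\pmod 8$, a similar computation shows that $L$ equals the ring class field of the order $\mathbb{Z}[\sqrt{-n}]$ (of conductor~$2$ in $\mathfrak{O}_K$); invoking Cox's theorem \cite{Cox} that $p=X^2+nY^2$ iff $p$ splits completely in this ring class field gives $m=1$ again. The remaining case (case~$5$ with $-n\equiv 5\pmod 8$ and $h_K=3$) is the subtle one: here $\mathfrak{O}_K=\mathbb{Z}[\omega]$ with $\omega=(1+\sqrt{-n})/2$, the ring class field of conductor~$2$ is strictly larger than the Hilbert class field $L$, and writing $\alpha=x+y\omega$ yields $N(\alpha)=x^2+xy+\tfrac{n+1}{4}y^2$, so the principality of $\mathfrak{p}$ becomes $4p=(2x+y)^2+ny^2=X^2+nY^2$, giving $m=4$.

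The main obstacle will be the bookkeeping in the five-case analysis, especially verifying that in cases~$1$ and $4$ the nontrivial conductor $\mathfrak{F}$ still produces a ray class field coinciding with a simpler one (Hilbert or ring class field) due to triviality of $(\mathfrak{O}_K/\mathfrak{F})^*$ on the relevant component. The $m=4$ case is delicate because it is the unique situation where $p=X^2+nY^2$ is strictly stronger than $\mathfrak{p}$ being principal in $\mathfrak{O}_K$: in every other case the ring class field of conductor~$2$ coincides with the Hilbert class field, so the two conditions agree.
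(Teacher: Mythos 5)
Your proposal is correct and reaches the same conclusions, but it organizes the two equivalences differently from the paper. For $(1)\Leftrightarrow(2)$ the content is the same: the paper works through the tower $L/K/\mathbb{Q}$, using Dedekind's theorem in $M$ and the fact that $L/K$ is Galois of prime degree $3$, while you phrase it as the cycle-type/fixed-point argument for $\mathrm{Frob}_p$ in $S_3$; these are two dressings of one argument. The real divergence is in $(1)\Leftrightarrow(3)$: the paper re-proves the needed special case of Cox's Theorem 9.4 by hand, writing a generator $a$ of $\mathfrak{P}_1$ with $a\equiv 1\pmod{\mathfrak{F}}$ in terms of the integral basis of $\mathcal{O}_K$ in each of the five cases and computing $N_{K/\mathbb{Q}}(a)$ directly (including a parity argument showing that the $\sqrt{-n}$-coordinate is even when $-n\equiv 1\pmod 8$, which is what rules out $m=4$ there). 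You instead identify $L$ in each case with the Hilbert class field or with the ring class field of $\mathbb{Z}[\sqrt{-n}]$ via the ray/ring class number formulas, and then quote Cox's theorem as a black box; the only case where the ring class field of conductor $2$ is strictly larger than $L$ is $-n\equiv 5\pmod 8$ with $2\nmid d_M$, which is exactly where $m=4$ appears. Your route buys a cleaner explanation of \emph{why} $m=4$ occurs only there (it is the unique case where $p=X^2+nY^2$ is strictly stronger than principality of $\mathfrak{p}$), at the cost of having to verify the coincidences of class fields — but those degree counts are already done in the proof of Theorem \ref{thm1}, so nothing new is needed. Two small caveats apply equally to both proofs: finitely many primes (ramified primes and primes dividing the index $[\mathcal{O}_M:\mathbb{Z}[\alpha]]$) must be excluded, and neither argument makes this explicit; and in Case 4 your observation that the ray class field modulo $\mathfrak{P}_1$ collapses to the Hilbert class field, while formally consistent with Theorem \ref{thm1}, would contradict the total ramification of $\mathfrak{P}_1$ in $L/K$ if that case were inhabited — in fact an inertia-group argument shows Case 4 is vacuous (the inertia group at $2$ in $S_3$ would have to be all of $S_3$, which is not cyclic and has trivial wild part), so both your treatment and the paper's are vacuously correct there.
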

 
By combining Theorem \ref{thm3} and Theorem \ref{thm1"}, we obtain a generalization of Theorem \ref{Faisant}. 
Also by combining Theorem \ref{thm2"} and Theorem \ref{thm3}, we obtain a generalization of Theorem \ref{Dutch}.
In terms of ternary recurrences these generalizations have consequences that
are listed in Tables \ref{tab:1} and \ref{tab:2}.

\begin{table}[ht]
  \caption{Results of the form $p\mid u_{p-1} \Leftrightarrow p=(X/2)^2+n(Y/2)^2$, $2\mid X+Y$ with
$u_{n+3}+a_2u_{n+1}+a_3u_{n}=0$, $u_0=0$, $u_1=-a_2$, $u_2=-a_3$.}
\label{tab:1}
\begin{tabular}{|l|l|l|l|}
  \hline
	  $n$ & $(a_1,a_2,a_3)$ &${\rm disc}(f)$& Exceptional primes \\
	\hline \hline
	$23$ & $(0,-1,1)$ & $-23$ & $\{3,23\}$\\
	\hline
	
	$31$ &$(0,1,1)$ & $-31$ & $\{3,31\}$\\
	
	\hline
	$59$ & $(0,2,1)$& $-59$ & $\{2,3,59\}$\\
	
	\hline
	$211$  & $(0,-2,3)$ & $-211$ & $\{2,3,211\}$\\
	\hline
	$283$  & $(0,4,1)$ &$-283$ & $\{2,3,283\}$\\

	\hline
	$499$ & $(0,4,3)$ & $-499$ & $\{2,3,499\}$\\
	
	\hline
	$643$ & $(0,-2,5)$ & $-643$ & $\{2,3,5,643\}$\\
	
	\hline
\end{tabular}
\end{table}

\begin{table}[ht]
\caption{Results of the form $p\mid u_{p-1} \Leftrightarrow 4p=X^2+nY^2$,  with
$u_{n+3}+a_1u_{n+2}+a_2u_{n+1}+a_3u_{n}=0,~~ u_0=0,~ u_1=1, ~u_2=-a_1$.}
\label{tab:2}
\begin{tabular}{|l|l|l|l|}
	\hline
	  $n$ & $(a_1,a_2,a_3)$ &${\rm disc}(f)$& Exceptional primes \\
	\hline \hline
	$83$ & $(1,1,2)$ & $-83$ & $\{2,3,47,83\}$\\
	\hline
	
	$107$ &$(1,3,2)$ & $-107$ & $\{2,3,7,107\}$\\
	
	\hline
	$139$ & $(-1,1,2)$& $-139$ & $\{2,3,47,139\}$\\
	
	\hline
	$307$  & $(-1,3,2)$ & $-307$ & $\{2,3,7,307\}$\\
	\hline
	$331$  & $(-2,4,1)$ &$-331$ & $\{2,3,5,17,331\}$\\

	\hline
	$379$ & $(1,1,4)$ & $-379$ & $\{2,3,101,379\}$\\
	
	\hline
	$547$ & $(1,-3,4)$ & $-547$ & $\{2,3,7,547\}$\\
	
	\hline
	$883$ & $(5,-5,2)$ & $-883$ & $\{2,3,5,421,883\}$\\
	
	\hline
	$907$ & $(5,1,2)$ & $-907$ & $\{2,3,5,11,19,907\}$\\
	\hline
\end{tabular}
\end{table}
\subsection{Applications of Theorem \ref{thm1}} 
Theorem \ref{thm1} can be applied to some well-known sequences to give
results similar to Theorems \ref{Dutch} and \ref{Faisant}.
\begin{cor}\label{cor1}
Consider the Perrin sequence defined by: 
$$P_0=3,~P_1=0,~P_2=2,~~P_{n+3}=P_{n+1}+P_n\text{~for~}n\ge 0.$$ Let $p$ be a prime integer such that 
$p \nmid 2\cdot 3\cdot 23$. The following assertions are equivalent:
\begin{enumerate}
	\item $P_{p+1} \equiv 2 \pmod*{p}$.
	\item $p=X^2+23Y^2$, with  $X,Y \in \mathbb{Z}$.
	\item $\kronecker{-23}{p}=1$ and $x^3-x-1 \equiv 0 \pmod*{p}$ has a solution in $\mathbb{Z}/p\mathbb{Z}$.
\end{enumerate} 
\end{cor}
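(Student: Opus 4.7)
The plan is to combine Theorem \ref{thm3"} (which recasts condition (1) as the splitting datum $N_p(f)=3$) with Theorems \ref{thm1} and \ref{thm3} (which convert $N_p(f)=3$ into a representation of $p$ by $X^2+23Y^2$, and into the congruence-plus-root condition). The three bullets of the corollary then become one and the same statement about the splitting behaviour of $f(x)=x^3-x-1$.

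First I would identify the Perrin sequence with the sequence $\{s_n\}$ of Theorem \ref{thm3"}. Taking $a_1=0$, $a_2=-1$, $a_3=-1$ yields $s_0=3$, $s_1=-a_1=0$, $s_2=a_1^2-2a_2=2$, and the recurrence $s_{n+3}=s_{n+1}+s_n$, which is exactly the Perrin recurrence. The characteristic polynomial is $f(x)=x^3-x-1$ with $\operatorname{disc}(f)=-23$, and $a_1^2-3a_2=3$, so $6\operatorname{disc}(f)(a_1^2-3a_2)$ has prime divisors exactly $\{2,3,23\}$, matching the excluded set in the corollary. Since $a_1^2-2a_2=2$, Theorem \ref{thm3"} gives: for $p\notin\{2,3,23\}$, the congruence $P_{p+1}\equiv 2\pmod*{p}$ holds if and only if $N_p(f)=3$. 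This establishes the equivalence of (1) with ``$N_p(f)=3$''.

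Next I would apply Theorem \ref{thm1} to $f=x^3-x-1$. The discriminant $-23$ is odd and square-free, hence $[\mathfrak{O}_M:\mathbb{Z}[\alpha]]=1$ and $d_M=-23$, so $2\nmid d_M$ and we fall into the fifth bullet, with conductor $\mathfrak{F}=\langle 1\rangle$. As $h_K=3$ for $K=\mathbb{Q}(\sqrt{-23})$, the splitting field $L$ of $f$ is the Hilbert class field of $K$. Theorem \ref{thm3} then yields the chain of equivalences ``$p$ splits completely in $L$'' $\Leftrightarrow$ ``$\kronecker{-23}{p}=1$ and $f\equiv 0\pmod*{p}$ has a root'' $\Leftrightarrow$ ``$mp=X^2+23Y^2$'', where $m=1$ since $-23\equiv 1\pmod*{8}$ (not $5\pmod*{8}$). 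Combining this with the fact that $N_p(f)=3$ is equivalent to $p$ splitting completely in $L$ closes the loop and gives (1) $\Leftrightarrow$ (2) $\Leftrightarrow$ (3).

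There is no genuine obstacle; the argument is a bookkeeping application of the general theorems already proved in the paper. The two points deserving a moment's attention are (a) verifying that $d_M=\operatorname{disc}(f)=-23$, so that the fifth bullet of Theorem \ref{thm1} (rather than one of the $2\mid d_M$ cases) applies, and (b) correctly computing $-n\bmod 8$ to select $m=1$ rather than $m=4$ in Theorem \ref{thm3}, which is what produces the clean representation $p=X^2+23Y^2$ in (2).
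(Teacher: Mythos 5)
Your proposal is correct, and its overall architecture (reduce everything to the complete-splitting condition for $f=x^3-x-1$ via Theorems \ref{thm1} and \ref{thm3}) matches the paper's for the equivalence $(2)\Leftrightarrow(3)$. Where you differ is in handling assertion $(1)$: you specialize Theorem \ref{thm3"} with $(a_1,a_2,a_3)=(0,-1,-1)$, checking that the Perrin initial values and recurrence match those of $\{s_n\}$, that $a_1^2-2a_2=2$, and that the excluded set $p\mid 6\,\mathrm{disc}(f)(a_1^2-3a_2)$ is exactly $\{2,3,23\}$; this makes the corollary a pure bookkeeping consequence of results already proved. The paper instead does not invoke Theorem \ref{thm3"} at all in this proof: it re-runs the Frobenius computation by hand, writing $P_n=\alpha^n+\beta^n+\gamma^n$ by Binet's formula, showing $P_{p+1}\equiv 2$ when $\# D(\mathfrak{B})=1$, and ruling out $\# D(\mathfrak{B})=2$ and $\# D(\mathfrak{B})=3$ by deriving the contradictions $23/3\equiv 0$ and $3\equiv 0$ modulo $\mathfrak{B}$. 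The two arguments are the same mechanism; your version is shorter and exhibits the corollary as a genuine specialization of the general theorem, while the paper's inline computation is self-contained and surfaces the exceptional primes $2,3,23$ directly from the case analysis. Your two flagged verification points, that $d_M=-23$ places $f$ in the fifth case of Theorem \ref{thm1}, and that $-23\equiv 1\pmod{8}$ forces $m=1$ in Theorem \ref{thm3}, are exactly the right things to check and are both correct.
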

\begin{cor}\label{cor2}
	Consider the Berstel sequence defined by: 
	$$B_0=B_1=0,~B_2=1,~~B_{n+3}=2B_{n+2} -4B_{n+1}+4B_n\text{~for~}n\ge 0.$$ Let $p$ be a prime integer such that $p \nmid 2\cdot 3\cdot 11  \cdot 13$. The following assertions are equivalent:
	\begin{enumerate}
		\item $B_{p} \equiv 0 \pmod*{p}$.
		\item $p=X^2+11Y^2$, with  $X,Y \in \mathbb{Z}$.
	\end{enumerate} 
\end{cor}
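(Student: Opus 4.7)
The characteristic polynomial of the Berstel recurrence is $f(x) = x^3 - 2x^2 + 4x - 4$, which in the notation of Theorem \ref{thm2"} corresponds to $(a_1, a_2, a_3) = (-2, 4, -4)$; it is irreducible over $\mathbb{Q}$ with $\mathrm{disc}(f) = -176 = -4^2 \cdot 11$, so in the setup of Theorem \ref{thm1} we have $t = 2$ and $n = 11$. The plan is to chain three equivalences: Theorem \ref{thm1} identifies the splitting field $L$ of $f$ as a ray class field; Theorem \ref{thm3} recasts ``$p$ splits completely in $L$'' (equivalently, $N_p(f) = 3$) as the representability of $p$ by $X^2 + 11 Y^2$, which is condition (2); and Theorem \ref{thm2"} recasts $N_p(f) = 3$ as the divisibility $p \mid B_p$, which is condition (1).

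For the first link, since $-n = -11 \equiv 5 \pmod*{8}$, only cases 2 and 5 of Theorem \ref{thm1} are a priori possible, distinguished by whether $2 \mid d_M$, where $M = \mathbb{Q}(\alpha)$ is generated by the real root $\alpha$ of $f$. Because $M$ is cubic it contains no quadratic subfield, ruling out $d_M = -11$; hence $d_M \in \{-44, -176\}$, and in particular $2 \mid d_M$, placing us in case 2. Since $h_K = h(\mathbb{Q}(\sqrt{-11})) = 1$, Theorem \ref{thm1} identifies $L$ with the ray class field of $K = \mathbb{Q}(\sqrt{-11})$ of conductor $\langle 2 \rangle$, and Theorem \ref{thm3} applies with $m = 1$ (since $2 \mid d_M$). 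For the third link, matching initial values shows $U_0 = 0 = B_1$, $U_1 = 1 = B_2$, and $U_2 = 2 = B_3$, so the sequence $\{U_n\}$ of Theorem \ref{thm2"} satisfies $U_n = B_{n+1}$ for all $n \geq 0$, whence $U_{p-1} = B_p$; since $D = -176$ is a unit modulo every $p \nmid 2 \cdot 11$, the trichotomy of Theorem \ref{thm2"} collapses to the equivalence $N_p(f) = 3 \Leftrightarrow B_p \equiv 0 \pmod*{p}$. Chaining the three steps yields (1) $\Leftrightarrow$ (2).

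The main obstacle I expect is verifying that $2 \mid d_M$, i.e.\ that $2$ genuinely ramifies in $M$, which requires either a local computation at $2$ or an explicit description of $\mathcal{O}_M$. A secondary task is identifying the finite exceptional set: beyond the ramified primes $2$ and $11$, one must check by hand that $p = 3$ and $p = 13$ are genuine exceptions. Indeed, a short unrolling of the recurrence gives $B_{13} = 0$, while $13$ is not of the form $X^2 + 11 Y^2$, so the equivalence really does fail at $p = 13$; this justifies its inclusion, alongside $p = 3$, in the exceptional set stated in the corollary.
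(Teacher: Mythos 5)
Your high-level strategy coincides with the paper's on the class field theory side: Theorem \ref{thm1} (the case $2\mid d_M$, $-n\equiv 5\pmod*{8}$, $h_K=1$) identifies $L$ as the ray class field of $K=\mathbb{Q}(\sqrt{-11})$ of conductor $\langle 2\rangle$, and Theorem \ref{thm3} with $m=1$ converts complete splitting into $p=X^2+11Y^2$; your bookkeeping ($n=11$, $t=2$, $U_n=B_{n+1}$, $D=-176$ a unit for $p\nmid 2\cdot 11$) is correct. However, your justification that $2\mid d_M$ is a non sequitur: a cubic field never contains a quadratic subfield, so that observation cannot distinguish $d_M=-11$ from $d_M=-44$ or $-176$ (all three give the same quadratic resolvent $\mathbb{Q}(\sqrt{-11})$). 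You rightly flag that a local computation at $2$ is needed; the paper supplies it by citing Alaca--Williams for $\langle 2\rangle=\mathfrak{P}^3$ in $M$. A cheap alternative: if $2\nmid d_M$ then $L/K$ would be unramified everywhere, hence contained in the Hilbert class field of $K$, which equals $K$ since $h_K=1$, contradicting $[L:K]=3$; or use Minkowski's bound to exclude $|d_M|=11$.

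The more serious gap is your third link. Theorem \ref{thm2"} is stated only ``with at most finitely many exceptions,'' with no control over which primes are exceptional, so invoking it cannot yield the corollary as stated, which asserts the equivalence for \emph{every} $p\nmid 2\cdot 3\cdot 11\cdot 13$. The paper does not cite Theorem \ref{thm2"} here; it reruns the Binet formula and decomposition-group argument for this specific $f$, and it is precisely the subcases $\#D(\mathfrak{B})=2$ and $\#D(\mathfrak{B})=3$ that produce the quantities $2\cdot 13/9$ and $3\cdot 4+4=16$, whose nonvanishing modulo $\mathfrak{B}$ is what forces the exclusion of $13$, $3$ and $2$. Your check that $B_{13}=0$ while $13\ne X^2+11Y^2$ shows that $13$ \emph{must} be excluded, but not that $\{2,3,11,13\}$ exhausts the exceptions; to close this you must either make the exceptional set of Theorem \ref{thm2"} explicit or carry out the direct computation as the paper does. (As a side remark, $p=3$ is excluded because the argument divides by $3$, not because the equivalence demonstrably fails there: $B_3=2$ and $3\ne X^2+11Y^2$, so both assertions are false at $p=3$.)
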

The Berstel sequence has six zero-terms, which is the maximum
number of zero-terms for a non-zero ternary linear recurrence 
by a result of Beukers \cite{B}. 
\par Let $\tau_{16}(n)$ be the coefficient of $x^n$ in the formal series
expansion of 
$$x\big(1+240\sum_{k=1}^{\infty}x^k\sum_{d\mid k}d^3\big)\prod_{k=1}^{\infty}(1-x^k)^{24}.$$
(In modular forms parlance 
$\tau_{16}(n)$ is the $n$-th Fourier coefficient of
the cusp form $\Delta E_4$ of weight 16.)
 \begin{cor}\label{cor5}
 	Consider the ternary recurrence  sequence given by: $C_0=0,~C_1=0,~C_2=1,~~C_{n+3}=C_{n+2} +C_n$. 
 	Let $p \nmid 2\cdot 3\cdot 29\cdot 31$ be a prime. The following assertions are equivalent:
 	\begin{enumerate}
 		\item $C_{p} \equiv 0 \pmod*{p}$.
 		\item $p=X^2+31Y^2$, with  $X,Y \in \mathbb{Z}$.
 		\item $\tau_{16}(p)\equiv 2\pmod*{31}.$
 	\end{enumerate} 
 \end{cor}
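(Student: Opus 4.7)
The plan is to pivot all three conditions through the single quantity $N_p(f) = 3$, where $f(x) = x^3 - x^2 - 1$ is the characteristic polynomial of the recurrence $\{C_n\}$.

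First I would compute $\mathrm{disc}(f) = -31$. Since $-31$ is odd and squarefree, $\mathcal{O}_M = \mathbb{Z}[\alpha]$ and so $d_M = -31$ is odd, placing us in the last case of Theorem \ref{thm1} (conductor $\mathfrak{F} = \langle 1 \rangle$). Because $h_K = 3$ for $K = \mathbb{Q}(\sqrt{-31})$, the splitting field $L$ of $f$ is the Hilbert class field of $K$. Since $-31 \equiv 1 \pmod*{8}$ (not $5 \pmod*{8}$), Theorem \ref{thm3} applies with $m = 1$, giving $N_p(f) = 3 \Longleftrightarrow p = X^2 + 31Y^2$, which is statement (2).

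Next, write $f = x^3 + a_1 x^2 + a_2 x + a_3$ with $(a_1, a_2, a_3) = (-1, 0, -1)$. The associated sequence $\{U_n\}$ from Theorem \ref{thm2"} has $U_0 = 0,\, U_1 = 1,\, U_2 = 1$ and satisfies $U_{n+3} = U_{n+2} + U_n$. Comparing with $C_0 = 0,\, C_1 = 0,\, C_2 = 1$, a direct check of initial values yields $C_n = U_{n-1}$ for all $n \geq 1$, so $C_p = U_{p-1}$. Theorem \ref{thm2"} then states that, outside a finite exceptional set, $N_p(f) = 3$ is equivalent to $D U_{p-1}^2 \equiv 0 \pmod*{p}$, which reduces (since $D = -31$ and $p \neq 31$) to $C_p \equiv 0 \pmod*{p}$. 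One checks that the $N_p(f) = 0$ branch $-31\, C_p^2 \equiv (a_1^2 - 3a_2)^2 = 1 \pmod*{p}$ is incompatible with $C_p \equiv 0 \pmod*{p}$, so the equivalence is clean. Together with the previous paragraph, this settles (1) $\Leftrightarrow$ (2).

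For (3), the plan is to invoke the Wilton-style congruence
\[
\tau_{16}(p) \equiv N_p(f) - 1 \pmod*{31},
\]
an exact analogue of the Ramanujan--Wilton congruence for $\tau \pmod*{23}$ recalled in the introduction. Granted this, $\tau_{16}(p) \equiv 2 \pmod*{31}$ iff $N_p(f) = 3$, closing the circle. The congruence encodes the fact that the mod-$31$ Galois representation attached by Deligne to the unique normalized cuspform $\Delta E_4 \in S_{16}(\mathrm{SL}_2(\mathbb{Z}))$ is dihedral, isomorphic to the two-dimensional irreducible representation of $\mathrm{Gal}(L/\mathbb{Q}) \cong S_3$; equivalently, $\Delta E_4$ is congruent modulo $31$ to a theta series attached to a nontrivial cubic character of the class group of $K$.

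The main obstacle is establishing this last congruence. The cleanest verification is to write down an explicit weight-$16$ modular form $g$ (for instance, an Eisenstein series times a weight-$1$ theta series of $K$ with an order-$3$ class-group character) whose $p$-th Hecke eigenvalue is manifestly $N_p(f) - 1$, and to check $\Delta E_4 \equiv g \pmod*{31}$ by comparing $q$-expansions up to the relevant Sturm bound; this reduces to a finite computation entirely parallel to Wilton's original treatment for $\tau \pmod*{23}$. The exceptional prime list $\{2, 3, 29, 31\}$ then arises from combining the hypotheses of Theorems \ref{thm2"} and \ref{thm3} with the primes of bad reduction of the auxiliary theta series.
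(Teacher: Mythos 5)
Your overall strategy is sound, and for the equivalence of (1) and (2) it takes a genuinely different route from the paper's written proof: you black-box Theorem \ref{thm2"} (after correctly matching $C_p=U_{p-1}$ for the normalized sequence attached to $f=x^3-x^2-1$, with $(a_1,a_2,a_3)=(-1,0,-1)$) together with Theorem \ref{thm3}, whereas the paper's proof of the corollary redoes the decomposition-group computation from scratch for this specific $f$, ruling out $\# D(\mathfrak{B})=2$ and $\# D(\mathfrak{B})=3$ by explicit congruences among the roots. Your route is shorter and matches the philosophy announced in the introduction; your treatment of (2) $\Leftrightarrow$ (3), namely actually proving the dihedral congruence $\tau_{16}(p)\equiv N_p(f)-1\pmod*{31}$ by comparing $\Delta E_4$ with a theta series up to the Sturm bound, is more self-contained than the paper, which simply cites Aygin--Williams and Ciolan et al.\ for that step. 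The observation that $N_p(f)-1\in\{-1,0,2\}$ are distinct modulo $31$ is the right way to upgrade the congruence to an equivalence.

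The one concrete shortfall is the exceptional set. Theorem \ref{thm2"} is stated only ``with at most finitely many exceptions'' and carries no explicit hypothesis on $p$, so combining it with Theorem \ref{thm3} yields the equivalence of (1) and (2) only outside an \emph{unspecified} finite set of primes; it does not prove the corollary as stated, which asserts the equivalence for every $p\nmid 2\cdot 3\cdot 29\cdot 31$. In particular the prime $29$ is invisible to your argument: it emerges only from the explicit computation in the case $\# D(\mathfrak{B})=2$, where (with $\sigma$ swapping $\alpha,\beta$ and fixing $\gamma$) the hypothesis $C_p\equiv 0\pmod*{p}$ forces $\alpha+\beta\equiv 2\gamma$, hence $\gamma\equiv 1/3$ and $\alpha\beta\equiv 3$ modulo $\mathfrak{B}$, and then the relation $\alpha\beta+\gamma(\alpha+\beta)=0$ forces $29\equiv 0\pmod*{\mathfrak{B}}$. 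To close this, you would need either to carry out that computation (as the paper does) or to prove a version of Theorem \ref{thm2"} with an explicit exclusion set in the spirit of condition \eqref{long} in Theorem \ref{thm1"}; your appeal to ``the hypotheses of Theorem \ref{thm2"}'' does not suffice because that theorem has none to combine.
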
\label{cor6'}

 \subsection{Abelian splitting fields} For a large class of abelian extensions of degree $3$, Huard and al.\,\cite{H} determined explicitly the set of 
rational primes $p$ splitting completely in
them, as well as the exceptional  primes. 
Here we recall their result. 
\begin{thm}\label{thmH}
Let $f$ be a monic irreducible polynomial of degree $3$ given by $f:=x^3+Ax+B,$ where the largest positive $k$ such that $k^2 \mid A$ and $k^3 \mid B$ is $1$. Assume that the splitting field $L$ of $f$ is abelian (this is so if and only if ${\rm disc}(f)$ is a square). Then, if $p\nmid 3\,{\rm disc}(f)$, 
$$p\text{~splits~completely~in~}L 
\iff p \equiv a_1,a_2, 
\ldots, a_{\varphi(F)/3}\pmod*{F},$$
where $\varphi$ is 
Euler's totient function and the $F,a_i's$ are given in \cite[pp. 468--469]{H}.
\end{thm}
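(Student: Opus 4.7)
The plan is to invoke the Kronecker--Weber theorem and class field theory for abelian extensions of $\mathbb{Q}$. Since $L$ is the splitting field of an irreducible cubic, $\mathrm{Gal}(L/\mathbb{Q})$ is either $S_3$ or $A_3$, and the classical discriminant criterion shows that it is $A_3 \cong \mathbb{Z}/3\mathbb{Z}$ exactly when $\mathrm{disc}(f)$ is a square, which justifies the parenthetical claim. Under this hypothesis, $L/\mathbb{Q}$ is a cyclic cubic extension.

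By Kronecker--Weber, $L \subseteq \mathbb{Q}(\zeta_F)$ for a unique minimal $F$, the conductor of $L/\mathbb{Q}$. The restriction of Galois groups gives a surjection
$$
\pi:(\mathbb{Z}/F\mathbb{Z})^{*}\cong \mathrm{Gal}(\mathbb{Q}(\zeta_F)/\mathbb{Q})\twoheadrightarrow \mathrm{Gal}(L/\mathbb{Q}),
$$
whose kernel $H$ has index $3$, so $|H|=\varphi(F)/3$. For any prime $p\nmid F$, the Artin symbol of $p$ in $\mathrm{Gal}(L/\mathbb{Q})$ equals $\pi(p\bmod F)$, and $p$ splits completely in $L$ if and only if this symbol is trivial, equivalently if and only if $p\bmod F\in H$. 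This produces exactly $\varphi(F)/3$ residue classes modulo $F$ in which the splitting primes must lie.

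To make $F$ explicit one invokes the conductor--discriminant formula, which for a cyclic cubic field $L$ reduces to $\mathrm{disc}(L)=F^{2}$, combined with
$$
\mathrm{disc}(f)=[\mathcal{O}_L:\mathbb{Z}[\alpha]]^{2}\,\mathrm{disc}(L),
$$
where $\alpha$ is any root of $f$. The hypothesis that no prime $k>1$ simultaneously satisfies $k^{2}\mid A$ and $k^{3}\mid B$ is the primitivity condition needed to control the index $[\mathcal{O}_L:\mathbb{Z}[\alpha]]$, and it guarantees that every prime dividing $F$ divides $3\,\mathrm{disc}(f)$; thus the hypothesis $p\nmid 3\,\mathrm{disc}(f)$ in the statement does imply $p\nmid F$, so the Artin-symbol machinery applies unconditionally.

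The main obstacle is to realise the index-$3$ subgroup $H\subset (\mathbb{Z}/F\mathbb{Z})^{*}$ as an explicit list of residue representatives. The conductors of cyclic cubic fields over $\mathbb{Q}$ have the form $F=9^{\varepsilon}\prod_i q_i$ with $\varepsilon\in\{0,1\}$ and distinct primes $q_i\equiv 1\pmod 3$; the group $(\mathbb{Z}/F\mathbb{Z})^{*}$ carries a canonical cubic residue character at each $q_i$ (and at $9$ when $\varepsilon=1$), and $H$ is cut out by a specific product of these characters, determined by $(A,B)$. Listing the resulting $\varphi(F)/3$ residue classes requires a case analysis on the shape of $F$ and a computation of the cubic symbols from the coefficients $(A,B)$; this is precisely what Huard et al.\,\cite{H} carry out, and we import their tables to finish the proof.
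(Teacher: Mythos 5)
The paper does not prove this theorem: it is recalled verbatim from Huard--Spearman--Williams \cite{H} (``Here we recall their result''), so there is no internal proof to compare against. Your outline is the standard and correct route to such a statement: the discriminant criterion for $A_3$ versus $S_3$, Kronecker--Weber embedding $L\subseteq\mathbb{Q}(\zeta_F)$, identification of the splitting primes with the index-$3$ kernel $H\subseteq(\mathbb{Z}/F\mathbb{Z})^{*}$ via compatibility of Frobenius under restriction, and the conductor--discriminant formula $\mathrm{disc}(L)=F^{2}$ to see that $p\nmid 3\,\mathrm{disc}(f)$ forces $p\nmid F$ (note that for this last step you do not even need the primitivity hypothesis on $(A,B)$: since $L=\mathbb{Q}(\alpha)$ in the cyclic case, $\mathrm{disc}(L)\mid\mathrm{disc}(f)$ already gives it; the hypothesis is rather the normalization under which the tables of \cite{H} are stated). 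The genuinely nontrivial content --- the explicit values of $F$ and the representatives $a_1,\ldots,a_{\varphi(F)/3}$ computed from $(A,B)$ via cubic residue characters --- you import from \cite{H}, which is exactly what the theorem statement itself does, so your proposal is as complete as the claim requires.
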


Combining this result and Theorem \ref{thm1"}, we get the following corollary

\begin{cor}\label{cor2'}
   Let $f$ be a monic irreducible polynomial of degree $3$ given by $f:=x^3+Ax+B,$ and $D=\rm disc(f)$, where the largest positive $k$ such that $k^2 \mid A$ and $k^3 \mid B$ is $1$. Let $(N_n)_n$ be a ternary recurrence sequence given by: 
   $$N_{n+3} = -AN_{n+1}-BN_n,\quad \mbox{with}\quad  N_0=0, ~N_1=-A, ~N_2=-B.$$ 
   Modulo a prime $p,$ we have, with finitely many exceptions,
   $$
	DN_{p-1}^2\equiv 
	\begin{cases}
	\phantom{-}0\ &\textrm{if}\quad  p \equiv a_1,a_2, 
\ldots, a_{\varphi(F)/3}\pmod*{F};\\
	\phantom{-}a_2^4\ &\textrm{otherwise.}
	\end{cases}
	$$
   
  \end{cor}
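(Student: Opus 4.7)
\textbf{Proof plan for Corollary \ref{cor2'}.} The strategy is to feed the splitting criterion of Theorem \ref{thmH} into the trichotomy of Theorem \ref{thm1"}, exploiting the fact that an abelian cubic splitting field rules out one of the three cases.

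First I identify $(a_2,a_3)=(A,B)$ so that the sequence $(N_n)$ of the corollary coincides with the sequence $(u_n)$ of Theorem \ref{thm1"}. For every prime $p$ outside the finite set of primes dividing the quantity in \eqref{long}, that theorem yields the equivalences
\[
DN_{p-1}^2 \equiv 0 \pmod{p} \iff N_p(f)=3, \quad DN_{p-1}^2 \equiv A^4 \pmod{p} \iff N_p(f)=0,
\]
while any other residue class of $DN_{p-1}^2 \bmod p$ corresponds to $N_p(f)=1$ (with $A^4$ playing the role of $a_2^4$ in the notation of Theorem \ref{thm1"}).

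Next I eliminate the case $N_p(f)=1$. Under the hypothesis that $\mathrm{disc}(f)$ is a square, equivalently that $L$ is abelian, $\mathrm{Gal}(L/\mathbb{Q})$ is cyclic of order $3$ and $L=\mathbb{Q}(\alpha)$ is already generated by a single root of $f$. For any prime $p$ unramified in $L$ (which holds in particular whenever $p\nmid 3\,\mathrm{disc}(f)$) the cycle type of $f\bmod p$ is dictated by the order of the Frobenius element; in a cyclic group of order $3$ this order can only be $1$ or $3$, giving respectively complete splitting or irreducibility of $f\bmod p$. Hence $N_p(f)\in\{0,3\}$ and the third (``otherwise'') branch of Theorem \ref{thm1"} never occurs for such $p$.

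Combining this observation with Theorem \ref{thmH}: for every prime $p$ outside the finite union of the primes dividing $3\,\mathrm{disc}(f)$ and those excluded by \eqref{long}, we have $N_p(f)=3$ precisely when $p\equiv a_i\pmod{F}$ for some $i\in\{1,\ldots,\varphi(F)/3\}$, and $N_p(f)=0$ otherwise. Translating via the trichotomy of step one yields the dichotomy stated in the corollary. I do not anticipate a serious obstacle: once the rigidity $N_p(f)\neq 1$ in the cyclic cubic case is noted, the argument is a mechanical merger of two explicit criteria, and the exceptional set is automatically finite as a union of two finite sets.
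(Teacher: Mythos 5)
Your proposal is correct and takes essentially the same route as the paper, which derives Corollary \ref{cor2'} simply by combining Theorem \ref{thmH} with Theorem \ref{thm1"} (identifying $(a_2,a_3)=(A,B)$, so the stated $a_2^4$ is $A^4$). Your explicit observation that $N_p(f)=1$ cannot occur for an unramified prime when the splitting field is cyclic cubic is precisely the (implicit) point that collapses the trichotomy of Theorem \ref{thm1"} to the stated dichotomy.
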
 
  
\begin{exa} {\rm  
Let $f$ be the irreducible polynomial $x^3-31x+62$
having discriminant $4^2 \cdot 31^2$.  We define $(N_n)_n$ as follows:
$$N_0=0,~~N_1=31, ~~ N_2=-62 \quad \mbox{and} \quad N_{n+3} = 31N_{n+1}-62N_n.$$ 
By Corollary \ref{cor2'} we deduce that 
   $$
	4N_{p-1}\equiv 
	\begin{cases}
	\phantom{-}0\,\,\,\pmod*{p}\ &\textrm{if}\,\, p \equiv 1,2,4,8,15,16,23,27,29,30 \pmod*{31};\\
	\pm 31\pmod*{p}\ &\textrm{otherwise,}
\end{cases}
$$ 
with finitely many exceptions.}
\end{exa}

\section{preliminaries}\label{sec2}

Let $L$ be a finite Galois extension of 
a number field $K$  and $\mathfrak{P}$ a prime ideal of $\mathcal{O}_K$. 
It is well known (see for example Theorem 3.34 in \cite{Mi}) that there are positive integers $e, f, g$ such that 
$$\mathfrak{P}\mathcal{O}_L=(\mathfrak{B}_1\mathfrak{B}_2\cdots \mathfrak{B}_g)^e,\quad \left[  \mathcal{O}_L/\mathfrak{B}_i:\mathcal{O}_K/\mathfrak{P} \right]  = f,$$ 
for $i=1,\ldots,g \; \mbox{and} \; efg = [L:K],$ 
where the $\mathfrak{B}_i's$ are maximal ideals of $\mathcal{O}_L$. 
The integer $e$ is called the \emph{ramification degree} 
and
is the number  of times the 
maximal ideal $\mathfrak{B}_i$ of $\mathcal{O}_L$ that lies over $\mathfrak{P}$ repeats as a factor of  $\mathfrak{P}\mathcal{O}_L.$
In case $e>1$ one says 
that $\mathfrak{P}$ \emph{ramifies} in $K$.
The integer $g$ is called the 
\emph{decomposition index} and is the number of distinct prime ideals $\mathfrak{B}_i$ over $\mathfrak{P}$. Now we recall the Dedekind-Kummer theorem (see, for example, 
\cite[Theorem 10.1.5]{A}).
\begin{thm}
Let $K$ be an algebraic number field. Then a rational  prime ramifies in $K$ if and only 
if it divides the discriminant of $K$ over the rationals.
\end{thm}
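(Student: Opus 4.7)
The plan is to characterize divisibility of the absolute discriminant $d_K$ by $p$ through the trace pairing, and then relate degeneracy of that pairing modulo $p$ to ramification. Fix a $\mathbb{Z}$-basis $\omega_1,\ldots,\omega_n$ of $\mathcal{O}_K$, so that $d_K = \det\bigl(\mathrm{Tr}_{K/\mathbb{Q}}(\omega_i\omega_j)\bigr)$. The condition $p \mid d_K$ is then equivalent to this Gram matrix being singular modulo $p$, equivalently to the $\mathbb{F}_p$-bilinear trace form on the finite Artinian $\mathbb{F}_p$-algebra $A := \mathcal{O}_K / p\mathcal{O}_K$ being degenerate.

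Next I would decompose $A$ via the Chinese Remainder Theorem. Writing the factorization $p\mathcal{O}_K = \mathfrak{p}_1^{e_1}\cdots \mathfrak{p}_g^{e_g}$ with residue degrees $f_i$, CRT gives $A \cong \prod_i A_i$ where $A_i = \mathcal{O}_K/\mathfrak{p}_i^{e_i}$. Since trace is additive across this product decomposition, the trace pairing on $A$ is the orthogonal direct sum of the trace pairings on the $A_i$. Hence $A$ carries a non-degenerate trace form if and only if every local factor $A_i$ does.

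The heart of the proof is a local dichotomy. If $e_i = 1$, then $A_i = \mathbb{F}_{p^{f_i}}$ is a separable extension of $\mathbb{F}_p$, and the trace form of a finite separable field extension is non-degenerate (a classical fact). If $e_i \geq 2$, choose a nonzero $x$ in the image of $\mathfrak{p}_i^{\,e_i-1}$ in $A_i$; then $x$ is nonzero and nilpotent. For every $y \in A_i$ the product $xy$ is nilpotent, so the multiplication-by-$xy$ map on $A_i$, viewed as an $\mathbb{F}_p$-linear endomorphism, is nilpotent and therefore has trace zero. Thus $x$ lies in the radical of the trace form, proving that the form on $A_i$ is degenerate. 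Combining the three steps, $p \mid d_K$ if and only if some $e_i \geq 2$, which is precisely the statement that $p$ ramifies in $K$.

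The main obstacle is the local dichotomy, and especially the step that identifies the $\mathbb{F}_p$-valued ``trace'' on $A_i$ coming from $\mathrm{Tr}_{K/\mathbb{Q}}$ with the trace of the multiplication endomorphism on $A_i$ as an $\mathbb{F}_p$-vector space; this requires invoking compatibility of trace with base change from $\mathbb{Z}$ to $\mathbb{F}_p$, applied to the $\mathbb{Z}$-module $\mathcal{O}_K$. Once this identification is in place, the nilpotence-implies-zero-trace argument and the classical non-degeneracy of the trace form on separable extensions close the proof. All other steps — choosing an integral basis, reducing modulo $p$, and applying CRT — are routine bookkeeping.
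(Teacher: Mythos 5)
The paper does not prove this statement: it is quoted as a known result (Dedekind's discriminant theorem) with a citation to Alaca--Williams, so there is no in-paper argument to compare yours against. Your proposal is the standard proof of that theorem and it is correct and complete in outline: the reduction of $p \mid d_K$ to degeneracy of the trace form on $A=\mathcal{O}_K/p\mathcal{O}_K$, the orthogonal CRT splitting $A\cong\prod_i \mathcal{O}_K/\mathfrak{p}_i^{e_i}$, and the local dichotomy (non-degeneracy for the separable residue field when $e_i=1$; a nonzero nilpotent element of $\mathfrak{p}_i^{e_i-1}/\mathfrak{p}_i^{e_i}$ lying in the radical when $e_i\ge 2$) are exactly the right ingredients, and you correctly flag the one genuinely delicate point, namely that $\mathrm{Tr}_{K/\mathbb{Q}}$ reduced modulo $p$ agrees with the $\mathbb{F}_p$-algebra trace of $A$, which follows since the matrix of multiplication by $x$ on an integral basis reduces to the matrix of multiplication by $\bar{x}$ on the induced basis of $A$. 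Note also that your argument uniformly covers wild ramification, where the finer statement about the exact power of $p$ in $d_K$ would fail but the divisibility criterion does not.
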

It tells us that there are only finitely 
many ramified $\mathfrak{P}.$ 
Given a maximal ideal $\mathfrak{B}$ 
of $\mathcal{O}_L$ over $\mathfrak{P}$, the \emph{decomposition group} of  $\mathfrak{B}$ is defined as the subgroup of the Galois group that fixes  $\mathfrak{B}$ as a set, that is 
	$$D_{\mathfrak{B}}=\{ \delta \in {\rm Gal}(L/ K)~:~ \mathfrak{B}^{\delta}=\mathfrak{B}\}.$$ 

It is well known that the quotient space Gal$(L/K)/D_{\mathfrak{B}}$ has cardinality $g$, and hence the decomposition group has order $ef$. Given $ \delta \in {\rm Gal}(L/K)$, we define $\overline{\delta}$ by
$$\overline{\delta}(x+\mathfrak{B})= \delta (x) + \mathfrak{B}\quad \mbox{for~all}~~ x \in \mathcal{O}_K.$$
 The map $$\psi\,:\, D_{\mathfrak{B}} \rightarrow \text{Gal}\left( \mathcal{O}_L/\mathfrak{B} ~/~\mathcal{O}_K/\mathfrak{P} \right)$$  defined by $$\psi (\delta)=\overline{\delta},$$ is a surjective homomorphism (see for example  Proposition 9.4 in \cite{JN}) and its 
 kernel $I_{\mathfrak{B}}$ is given by
$$I_{\mathfrak{B}}= \{ \delta \in D_{\mathfrak{B}}\,:\,\delta (x) \equiv x \pmod*{\mathfrak{B}}  \quad \mbox{for~all}~~ x \in \mathcal{O}_L \}.$$  
It is a subgroup of $D_{\mathfrak{B}}$ called 
\emph{inertia group} and has cardinality $e.$
Any representative of Frobenius is called a 
\emph{Frobenius element} of Gal$(L/K)$ and denoted by $\text{Frob}_{\mathfrak{B}}$. When $\mathfrak{B}$ is unramified, $I_{\mathfrak{B}}$ is trivial as its order is $e$, and in this case,  $\text{Frob}_{\mathfrak{B}}$ is unique. In other words, $\text{Frob}_{\mathfrak{B}}$  is the unique element of $D_{\mathfrak{B}}$ which satisfies 
$$\delta (x) \equiv x^{N(\mathfrak{P})} \pmod*{\mathfrak{B}}, ~~\mbox{for~all}~~ x \in \mathcal{O}_L.$$
If  $\mathfrak{B}, \mathfrak{B}'$ are two maximal ideals lying over $\mathfrak{P}$, then  there exists $\delta \in \text{Gal}(L/K)$ such that  $\mathfrak{B}^{\delta} = \mathfrak{B}'.$ Hence the relation between the corresponding Frobenius element is given by 
$$\text{Frob}_{\mathfrak{B}^{\delta}}= \delta^{-1}~\text{Frob}_{\mathfrak{B}}~\delta.$$ 
One defines $\text{Frob}_{\mathfrak{P}}=[\text{Frob}_{\mathfrak{B}}]$, where $[*]$ is the conjugacy class in $\text{Gal}(L/K).$ When $L/K$ is  
an abelian  extension, this class consists of just one element and we identify $\text{Frob}_{\mathfrak{P}}$ with  $\text{Frob}_{\mathfrak{B}}$, thus $\text{Frob}_{\mathfrak{P}}$ does not depend on the maximal ideal lying above $\text{Frob}_{\mathfrak{P}}.$ 
Recall that $\mathfrak{P}$ is said to
be \emph{wildly ramified} in $L$ if $\gcd (e,p) = 1$, where $p$ is the only prime integer divisible by $\mathfrak{P}$ and $e$ is the ramification degree of $\mathfrak{P}$.
Let $\Im$ be a cycle in $K$ (a formal product $\Im = \Im_0\Im_{\infty}$ of an integral ideal $\Im_0$ with a product $\Im_{\infty}$ of 
some or all of the real primes). The cycle $\Im$ is called the \emph{conductor} of $L/K$ if the following conditions are satisfied:
\begin{itemize}
	\item The only prime ideals of $K$ dividing $\Im$ are those which are ramified 
	in $L$;
	\item For any prime ideal $\mathfrak{P}$ of $K$ dividing $\Im_0$ we have $\nu_{\mathfrak{P}}(\Im_0) \leq 2$;
	\item $\mathfrak{P} \parallel \Im_0$ if and only if $\mathfrak{P}$ is wildly ramified in $L.$ 
\end{itemize}
Denote the conductor of $L/K$ by $\Im$  and
define $I^{S(\Im)}$ to be the group of fractional ideals generated by 
the prime ideals of $K$ not  dividing $\Im_0$. Let $Cl_{\Im}$ be the quotient of $I^{S(\Im)}$  by the subgroup of principal ideals lying in $I^{S(\Im)}$ 
that are generated by an element $a$ such that $a>0$ 
for every real 
prime dividing $\Im$ and $\nu_{\mathfrak{P}}(a-1)\geq \nu_{\mathfrak{P}}(\Im)$ for all integral prime ideals $\mathfrak{P}$ dividing $\Im$. By the \textbf{Artin reciprocity law,} the map $$\rho ~:~ Cl_{\Im} \rightarrow \text{Gal}(L/K)$$  defined by $\rho ([\mathfrak{P}])= \text{Frob}_{\mathfrak{P}},$ is surjective. Moreover, there exists an abelian extension $H_{\Im}$ called 
\emph{Hilbert class field} modulo $\Im$ such that $Cl_{\Im} $ is isomorphic to Gal$(H_{\Im}/K)$ and a prime ideal $\mathfrak{P}'$ of $K$ splits completely in $H_{\Im}$ if and only if $\mathfrak{P}'$ is generated by an element $a$ such that $a>0$ 
for every real prime dividing $\Im$ and $\nu_{\mathfrak{P}}(a-1)\geq \nu_{\mathfrak{P}}(\Im)$ for all integral prime ideals $\mathfrak{P}$ dividing $\Im$. 
\subsection{More on primes that split completely}

Let $f$ be a monic irreducible polynomial with integer coefficients and $L$ the splitting field of $f$. We are interested in determining "explicitly" the primes $p$ which split completely over $L$.  
When $L$ is abelian it
is well-known that, with 
possibly finitely many exceptions, $p$ splits completely over $L$ if and only if the Frobenius element to $p$ (denoted $\sigma_p$) is trivial (cf.\,Wyman \cite{W}).
We give two examples illustrating this result.
\begin{exa}[Quadratic fields] {\rm Here $L= \mathbb{Q}(\sqrt{m}),$ 
with $m$ a square-free integer. Let $p \nmid 2m$ be
a prime and $\mathfrak{P}$ a prime ideal of 
$L$ above $p$ of norm $N\mathfrak{P}$. The Frobenius element is the unique element $\sigma_p$ of 
the decomposition group $D(\mathfrak{P})$ such that  for any $\alpha \in \mathcal{O}_L,$  $$\sigma_p(\alpha) \equiv \alpha^{N\mathfrak{P}} \pmod*{\mathfrak{P}}.$$ 
It follows by Euler's criterion that $\sigma_p = {\rm id}_L$ if and only if 
$\kronecker{m}{p}=1.$
For the theory of binary recurrence sequences 
this implies for example that if 
$p\nmid 2\cdot 5$ is a prime, then $F_{p-1} \equiv 0 \pmod*{p}$ if and only if 
$\kronecker{5}{p}=1$, which by the law of quadratic reciprocity is 
equivalent with $p \equiv \pm 1\pmod*{5}.$ A similar, more general, result 
is (see for example \cite{R}, pages 12): If $p$ is a prime integer and $(U_n)_n$ a binary recurrence sequence given by
$$U_{n+2} = PU_{n+1}-Q U_{n}, \quad U_0=0, \quad U_1=1, \quad n\geq 0,$$ then 
$$U_{p-\kronecker{D}{p}}\equiv 0 \pmod*{p} \quad \mbox{if} \quad  p \nmid PQD, \quad \mbox{where} \quad  D=P^2-4Q.$$}
\end{exa}
\begin{exa}[Cyclotomic fields] {\rm Here $L= \mathbb{Q}(\zeta_{m}),$ 
for some odd integer $m>1.$ We know that Gal$(L/\mathbb{Q}) \simeq \left( \mathbb{Z}/m\mathbb{Z}\right)^*$ with $[n]$ acting as $\zeta \mapsto \zeta^n$. 
Any prime 
$p\nmid m$ is unramified in $L$. Hence the Frobenius element 
in $p$ is trivial if and only 
if $[p]=[1],$ that is $p \equiv 1 \pmod*{m}$. 
We conclude that $p$ splits completely over $L$ if and only if $p \equiv 1 \pmod*{m}$.}
\end{exa}

For an arbitrary abelian extions no 
description of the Frobenius element in $p$ is known.
However, using class field theory, it can be shown that, with finitely many exceptions, the primes which split completely over $L$ lie in certain congruence classes modulo the conductor. In the 
generic case though no explicit description of these congruence classes is known.

\section{Proof of Theorem \ref{thm3"}}\label{sec:proof1}
Our proof of Theorem \ref{thm3"} will involve the following lemma.
\begin{lem}\label{lem1"}
	Let $f$ the monic irreducible polynomial with integer coefficients of degree $3$ and 
	$p \nmid \rm 6\,disc(f)$ be a prime.
	Suppose that $\mathfrak{B}$ is a prime ideal 
	above $p$ in the splitting field $L$ of $f$. If $L$ is non-abelian, then 
	$$
	N_p(f)=
	\begin{cases}
	\phantom{-}3\ &\textrm{if}\quad       \# D(\mathfrak{B})=1;\\
	\phantom{-}0\ &\textrm{if}\quad   \# D(\mathfrak{B})=3;\\
	\phantom{-}1\ &\textrm{if}\quad  \# D(\mathfrak{B})=2.\\
	\end{cases}
	$$
	If $L$ is abelian, then
	$$
	N_p(f)=
	\begin{cases}
	\phantom{-}3\ &\textrm{if}\quad       \# D(\mathfrak{B})=1;\\
	\phantom{-}0\ &\textrm{if}\quad   \# D(\mathfrak{B})=3;\\
	\end{cases}
	$$
\end{lem}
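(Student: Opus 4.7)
The plan is to combine Dedekind--Kummer with the standard interpretation of the Frobenius as a permutation of the roots of $f$.

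Fix a root $\alpha$ of $f$ and set $M=\mathbb{Q}(\alpha)$. Since $\text{disc}(M)$ divides $\text{disc}(f)$, the hypothesis $p\nmid\text{disc}(f)$ ensures that $p$ is unramified in $M$, and hence in its Galois closure $L$, and that $p\nmid[\mathcal{O}_M:\mathbb{Z}[\alpha]]$, so Dedekind--Kummer applies to the order $\mathbb{Z}[\alpha]\subset\mathcal{O}_M$. Thus the factorization of $p\mathcal{O}_M$ matches that of $f\pmod*{p}$, which is separable. Being a separable cubic, $f\bmod p$ splits as either (i) three distinct linear factors, giving $N_p(f)=3$; (ii) a linear factor times an irreducible quadratic, giving $N_p(f)=1$; or (iii) an irreducible cubic, giving $N_p(f)=0$.

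Next I would connect this to $D(\mathfrak{B})$. Since $\mathfrak{B}$ is unramified, $D(\mathfrak{B})$ is cyclic of order $\#D(\mathfrak{B})$, generated by the Frobenius $\sigma=\text{Frob}_{\mathfrak{B}}\in\text{Gal}(L/\mathbb{Q})$. A standard consequence of Dedekind--Kummer, applied via the identification of the roots of $f$ with the embeddings $M\hookrightarrow L$ and hence with the primes of $\mathcal{O}_M$ above $p$, is that the cycle type of $\sigma$ acting on the three roots of $f$ coincides with the multiset of degrees of the irreducible factors of $f\pmod*{p}$. In particular, $\#D(\mathfrak{B})$ equals the order of $\sigma$, which equals the lcm of the cycle lengths of $\sigma$ on the roots.

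Finally I would perform the case analysis on $\sigma$. If $L/\mathbb{Q}$ is non-abelian, then $\text{Gal}(L/\mathbb{Q})\cong S_3$ and $\sigma$ is the identity, a transposition, or a $3$-cycle, of cycle types $(1,1,1)$, $(2,1)$, $(3)$ respectively. Combined with the previous paragraph this gives the triples $(N_p(f),\#D(\mathfrak{B}))=(3,1),(1,2),(0,3)$, matching the assertion. If $L/\mathbb{Q}$ is abelian, then $\text{Gal}(L/\mathbb{Q})\cong\mathbb{Z}/3\mathbb{Z}$, which contains no transpositions, so only the cycle types $(1,1,1)$ and $(3)$ occur, producing the two cases $(3,1)$ and $(0,3)$. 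The only mild subtlety is checking that $p\nmid 6\,\text{disc}(f)$ suffices both for Dedekind--Kummer to apply and to rule out ramification in $L$; no substantive obstacle arises.
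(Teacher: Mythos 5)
Your proposal is correct, and it reaches the conclusion by a recognizably different route from the paper. The paper argues directly with the arithmetic of the tower $\mathbb{Q}\subset M=\mathbb{Q}(\alpha)\subset L$: after noting that $D(\mathfrak{B})$ is cyclic of order $1$, $2$ or $3$, it pins down each case by counting primes above $p$ and exploiting multiplicativity of $e$ and $f$, deriving contradictions (e.g.\ if $N_p(f)=1$ and $\#D(\mathfrak{B})=3$, the two primes of $M$ above $p$ with residue degrees $1$ and $2$ would have to stay prime in $L$, contradicting Galois-ness). You instead invoke Dedekind's theorem identifying the cycle type of $\mathrm{Frob}_{\mathfrak{B}}$ on the roots of $f$ with the degree pattern of the irreducible factors of $f$ modulo $p$, after which the lemma reduces to listing the cycle types available in $S_3$ versus $\mathbb{Z}/3\mathbb{Z}$. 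Your version is shorter and makes the abelian case (which the paper leaves to the reader) completely transparent: a cyclic group of order $3$ simply contains no transpositions. The cost is that you outsource the content to the cycle-type theorem, whose proof is essentially the orbit-counting that the paper carries out by hand; the paper's argument is self-contained modulo standard facts about $e$, $f$, $g$. Your preliminary observations are also sound: $p\nmid\mathrm{disc}(f)$ gives both $p\nmid[\mathcal{O}_M:\mathbb{Z}[\alpha]]$ (so Dedekind--Kummer applies to $\mathbb{Z}[\alpha]$) and separability of $f\bmod p$ (so only the three factorization patterns you list can occur), and unramifiedness in $M$ propagates to the Galois closure $L$. No gap.
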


\begin{proof}
We only deal with  the case where $L$ is non-abelian, the abelian case being similar and left to
the reader.
The assumption $p \nmid 6\,\text{disc}(f)$ implies 
that $p$ is unramified in $\mathbb{Q}(\alpha)$, where $\alpha$ is a root of $f$. 
By  \cite[Corollary 4.39]{MB} we infer that $p$ 
is unramified over $L$ and so $D(\mathfrak{B})$ is cyclic group. By Lagrange's theorem, we get $\# D(\mathfrak{B}) \in \{1,2,3\}$. If $N_p(f)=3$, then $p$ splits completely over $L$. Hence we deduce that the number of prime ideals of $L$ above $p$ is 
maximal, i.e.\,six, and so $\# D(\mathfrak{B})=1$. If $\# D(\mathfrak{B})=1,$ then $p$ splits completely over $L$ and so over $\mathbb{Q}(\alpha),$ and hence $N_p(f)=3$. 
This shows that $N_p(f)=3$ 
implies that $\# D(\mathfrak{B})=1$. If $\# D(\mathfrak{B})=2$, then 
the 
decomposition index $g=3$ and combining this argument with the fact that $p$ is prime ideal over $\mathbb{Q}(\alpha)$ and $[L:\mathbb{Q}(\alpha)]=2$, yields a contradiction, and so 
$N_p(f)=0$ implies  $\# D(\mathfrak{B})=3$. Now assume $\# D(\mathfrak{B}) =3$. If $N_p(f)=1$, then according to \cite[Proposition 10.5.1]{AW} we have $ \langle p \rangle = \mathfrak{P}_1\mathfrak{P}_2,$ with 
$\mathfrak{P}_1$ and $\mathfrak{P}_2$ prime ideals over $\mathbb{Q}(\alpha)$ 
having inertia degrees $2,$ respectively $1$. Since $[L:\mathbb{Q}(\alpha)]=2,$ 
$g=2$ and the index is a multiplicative function, it follows that $\mathfrak{P}_1$ and $\mathfrak{P}_2$ 
are prime ideals over $L$, which contradicts the fact that $L:\mathbb Q$ is Galois.
\end{proof}
\begin{proof}[Proof of Theorem \ref{thm3"}]
By Binet's formula, we have $s_n=\alpha^n+ \beta^n+ \gamma^n,$ where $\alpha,~\beta,~\gamma$ are the roots of $f$. Assume that $p \nmid \rm 6\,disc(f)(a_1^2-3a_2)$ is a prime, and $\mathfrak{B}$ a prime ideal of $\mathcal{O}_L$ above $p$. By Lemma \ref{lem1"} it suffices to show that 
 $$s_{p+1} \equiv a_1^2-2a_2 \pmod*{p} \Leftrightarrow \# D(\mathfrak{B})=1 \text{\,\,and\,\,} s_{p+1} \equiv a_2 \pmod*{p} \Leftrightarrow \# D(\mathfrak{B})=3.$$
If $\# D(\mathfrak{B})=1,$ then $x^p \equiv x \pmod*{\mathfrak{B}}$ for all $x \in\mathcal{O}_L$ and $\mathfrak{B}$ prime ideal above $L$. Hence modulo $\mathfrak{B}$ we have 
$$
s_{p+1} \equiv \alpha^{p+1}+  \beta^{p+1}+\gamma^{p+1} 
\equiv  \alpha^{2}+  \beta^{2}+\gamma^{2}\equiv a_1^2-2a_2
,$$
where we used the fact that $\alpha+\beta+\gamma=-a_1$ and $\alpha \beta +\alpha \gamma +\gamma \beta=a_2$. If $\# D(\mathfrak{B})=3,$ by symmetry,  we can assume without loss of generality that the Frobenius element is given by $$\sigma(\alpha)= \beta,~~\sigma(\beta)= \gamma,~~\sigma(\gamma)= \alpha.$$ We then get $$s_{p+1} \equiv \alpha \beta +\alpha \gamma +\gamma \beta \equiv a_2 \pmod*p.$$ Assume now that $s_{p+1} \equiv a_1^2-2a_2 \pmod*p$. We want to show that $\# D(\mathfrak{B})=1$. Since $p \nmid (a_1^2-3a_2)$, it follows that $\# D(\mathfrak{B})\neq 3$. If $\# D(\mathfrak{B})= 2$,  by symmetry, we can assume without loss of generality that the Frobenius element is given by $$\sigma(\alpha)= \beta,~~\sigma(\beta)= \alpha,~~\sigma(\gamma)= \gamma.$$ We let $y_1=3\alpha+a_1,y_2=3\beta+a_1$ 
and $y_3=3\gamma+a_1,$ where $y_1,y_2,y_3$ are the roots of the polynomial $x^3-3(a_1^2-3a_2)x-b$, with $b=-2a_1^3+9a_1a_2-27a_3$. Hence 
$$s_{p+1}= \dfrac{1}{3^{p+1}}\left((y_1-a_1)^{p+1}+ (y_2-a_1)^{p+1}+(y_3-a_1)^{p+1}\right).$$ 
Noting that  $(y_1-a_1)^{p+1} \equiv (y_2-a_1)(y_1-a_1) \pmod*{\mathfrak{B}},$ 
$y_1+y_2+y_3=0$ and 
using Fermat's little theorem, we get   
\begin{eqnarray*}
	9(a_1^2-2a_2) &\equiv & (y_2-a_1)(y_1-a_1)+(y_2-a_1)(y_1-a_1)+(y_3-a_1)^2\pmod *{\mathfrak{B}}\\ 
	&\equiv & 2y_1y_2+3a_1^2+y_3^2 \pmod*{\mathfrak{B}}.
\end{eqnarray*}
Hence modulo
$\mathfrak{B}$ we have
$$6(a_1^2-3a_2) \equiv 2y_1y_2+y_3^2
\equiv -2(3(a_1^2-3a_2)+y_3(y_1+y_2))+y_3^2,
$$
which implies $4(a_1^2-3a_2) \equiv y_3^2 \pmod*{\mathfrak{B}}$. By multiplying both sides by $y_3$ and using the fact that $y_3^3-3(a_1^2-3a_2)y_3-b=0$, we get 
$y_3 \equiv b/ (a_1^2-3a_2) \pmod*{\mathfrak{B}}$. So we deduce that $b^2-4(a_1^2-3a_2)^3 \equiv 0 \pmod*{\mathfrak{B}}$, which gives the contradiction as $b^2-4(a_1^2-3a_2)^3=-27\text{disc}(f)$. In conclusion we have  $s_{p+1} \equiv a_1^2-2a_2 \pmod*p$ 
if and only if $\# D(\mathfrak{B})=1$. Now assume that $s_{p+1} \equiv a_2 \pmod*p$. Since $p \nmid (a_1^2-3a_2)$, it suffices to show that $\# D(\mathfrak{B})\neq 2$. Let us assume that $\# D(\mathfrak{B})= 2$, by symmetry, we can assume without loss of generality that the Frobenius element is given by $$\sigma(\alpha)= \beta,~~\sigma(\beta)= \alpha,~~\sigma(\gamma)= \gamma.$$ We get, modulo $\mathfrak{B},$ 
\begin{eqnarray*}
	9a_2 &\equiv & (y_2-a_1)(y_1-a_1)+(y_2-a_1)(y_1-a_1)+(y_3-a_1)^2\\ 
	&\equiv & 2y_1y_2+3a_1^2+y_3^2,
\end{eqnarray*}
 which implies $-3(a_1^2-3a_2) \equiv 
 2y_1y_2+y_3^2 \pmod*{\mathfrak{B}}$. Since $y_1+y_2+y_3=0$ and $y_1y_2+y_3y_2+y_1y_3=-3(a_1^2-3a_2)$, one gets $a_1^2-3a_2 \equiv y_3^2 \pmod*{\mathfrak{B}}$. Multiplying both sides
 of this congruence by $y_3$ and since $y_3^3-3(a_1^2-3a_2)y_3-b=0$, we get $y_3 \equiv -b/ 2(a_1^2-3a_2) \pmod*{\mathfrak{B}}$. Combining this last equivalence and $a_1^2-3a_2 \equiv y_3^2 \pmod {\mathfrak{B}}$, we deduce that $b^2-4(a_1^2-3a_2)^3 \equiv 0 
\pmod*{\mathfrak{B}}$, which leads to a contradiction as $b^2-4(a_1^2-3a_2)^3=-27\,\text{disc}(f),$
and by assumption $p\nmid 6\,\text{disc}(f).$
\end{proof}

\section{Proof of Theorem \ref{thm1"}}\label{sec:pfthm1"}
\begin{proof}
We first determine the Binet formula of 
the sequence $\{u_n\}$. It is well known that 
there 
exist $a_1,b_1,c_1 \in \mathbb{Q}(\alpha, \beta, \gamma),$
such that
$u_n=a_1\alpha^n+ b_1\beta^n+ c_1\gamma^n.$ 
This gives rise to the system of equations 
\begin{eqnarray*}
	a_1+b_1+c_1 & = & u_0;\\
	a_1\alpha + b_1\beta +c_1\gamma & = & u_1;\\
	a_1\alpha^2 + b_1\beta ^2+c_1\gamma^2 & = & u_2,
\end{eqnarray*}
which by Cram\'er's rule yields that $a_1,b_1,c_1$ are of the form $\Delta_i/\Delta$, where 
$$
\Delta=\left| \begin{matrix} 1 & 1 & 1 \\
\alpha & \beta & \gamma\\
\alpha^2 & \beta^2 & \gamma^2
\end{matrix}
\right| = (\alpha-\beta)(\alpha-\gamma)(\gamma-\beta),
$$
and the
$\Delta_i$ are the $3\times 3$ determinants obtained 
by replacing the $i$th column in $\Delta$ by $(u_0,u_1,u_2)^T,$ leading to  
$$
\Delta_1= u_0(\beta\gamma^2-\gamma\beta^2)-(u_1\gamma^2-u_2\gamma) + \beta^2 u_1- u_2 \beta,
$$
$$
\Delta_2=(u_1\gamma^2-u_2\gamma)-u_0(\alpha \gamma^2-\alpha^2 \gamma) + (u_2\alpha-\alpha^2u_1),
$$ 
$$
\Delta_3= (u_2\beta-u_1\beta^2)+u_0(\alpha \beta^2-\alpha^2 \beta) - (u_2\alpha-\alpha^2u_1).
$$
Hence we get 
$$\Delta\,u_n = (\gamma-\beta) \alpha^{n+3} + (\alpha-\gamma) \beta^{n+3}+ (\beta-\alpha)\gamma^{n+3}.$$ 
Let $p$ be a prime satisfying \eqref{long} and $\mathfrak{B}\in \mathcal{O}_L$ a prime ideal above $p$. 
Since $\Delta ^2=\text{disc}(f)$, by Lemma \ref{lem1"} it suffices to show that 
$\text{disc}(f)u_{p-1}^2 \equiv 0 \pmod*p$ 
if and only if $\# D(\mathfrak{B})=1$ and $\text{disc}(f)u_{p-1}^2 \equiv a_2^4 \pmod*p$ if and only if $\# D(\mathfrak{B})=3.$  
In case $\# D(\mathfrak{B})=1,$ modulo $\mathfrak{B}$ we have $x^p \equiv x$ for all 
$x \in\mathcal{O}_L$ and so
\begin{eqnarray*}
	\Delta\,u_{p-1} &\equiv &  (\gamma-\beta)\alpha^{3}+  (\alpha-\gamma)\beta^{3}+(\beta-\alpha) \gamma^{3} \\ 
	&\equiv &  (\gamma-\beta)(-a_2\alpha-a_3)+  (\alpha-\gamma)(-a_2\beta-a_3)+(\beta-\alpha) (-a_2\gamma-a_3) \\
	&\equiv & 0 ,
\end{eqnarray*}
where we used the fact that $\alpha, \beta, \gamma$ are the solutions of the equation $x^3+a_2x+a_3=0$. If $\# D(\mathfrak{B})=3,$ by symmetry,  we can assume without loss of generality that the Frobenius element is given by $$\sigma(\alpha)= \beta,~~\sigma(\beta)= \gamma,~~\sigma(\gamma)= \alpha.$$ We then get, modulo $\mathfrak{B}$, 
$$
	\Delta\,u_{p-1} \equiv   (\gamma-\beta)\alpha^{2}\beta +  (\alpha-\gamma)\beta^{2}\gamma+(\beta-\alpha) \gamma^{2}\alpha \equiv a_2^2.
$$
Assume now that $u_{p-1} \equiv 0 \pmod*p$. We want to show that $\# D(\mathfrak{B})=1$. Since $p \nmid a_2$, it follows that $\# D(\mathfrak{B})\neq 3$. If $\# D(\mathfrak{B})= 2$,  by symmetry, we can assume without loss of generality that the Frobenius element is given by $$\sigma(\alpha)= \beta,~~\sigma(\beta)= \alpha,~~\sigma(\gamma)= \gamma.$$ 
Using $\alpha+\beta+\gamma=0$ we get  
 modulo $\mathfrak{B}$,
\begin{eqnarray*}
	0 &\equiv &  (\gamma-\beta) \alpha^{p+2} + (\alpha-\gamma) \beta^{p+2}+ (\beta-\alpha)\gamma^{p+2}\\ 
	&\equiv &  (\gamma-\beta)\beta \alpha^2 + (\alpha-\gamma) \beta^2 \alpha +(\beta-\alpha) \gamma^3 \\
	&\equiv & a_2\gamma (\beta-\alpha).
\end{eqnarray*}
Hence $\gamma \equiv 0 \pmod*{\mathfrak{B}},$ which implies $p \mid a_3$, contradiction. In conclusion, we get 
$u_{p-1} \equiv 0 \pmod*p$ if and only if $N_p(f)=3$ for all prime $p\nmid a_2a_3\rm disc(f)$. Now assume that $\rm disc(f)\,u_{p-1}^2 \equiv a_2^4 \pmod*p$. 
Since $p \nmid a_2$
it is clear that  $\# D(\mathfrak{B})\neq 1.$ If $\# D(\mathfrak{B})= 2$, then $a_2^2 \equiv \gamma^2 (\alpha-\beta)^2$ and so $a_2^2 \equiv -\gamma^2(3\gamma^2+4a_2)$. Using the fact that $\gamma^3+a_2\gamma+a_3=0$, we deduce that $a_2\gamma^2-3a_3\gamma + a_2^2 \equiv 0 \pmod*{\mathfrak{B}}$. Then, there exists $v \in \mathcal{O}_L$ such that 
modulo $\mathfrak{B}$
we have
$$v^2 \equiv 9a_3^2-4a_2^3,~
\gamma \equiv \dfrac{3a_3\pm v}{2a_2} \text{~and~}\alpha \beta \equiv -  \dfrac{2a_2a_3}{3a_3\pm v}.$$ Using the fact that $\alpha \beta + \alpha \gamma + \gamma \beta = a_2,$ we obtain $$ -a_2-  \dfrac{2a_2a_3}{3a_3\pm v} \equiv \gamma^2 \pmod*{\mathfrak{B}}.$$ It follows that  
$(20a_2^3a_3+27a_2^3+9a_2d)^2 \equiv d(31a_2^2+d)^2 \pmod* {\mathfrak{B}},$ 
contradicting assumption \eqref{long}.
\end{proof}

\section{Proof of Theorem \ref{thm1}}\label{sec:thm1}
\begin{proof}
Recall that by assumption we 
only consider monic irreducible polynomials $f$ of the form $x^3+ax^2+bx+c 
\in \mathbb{Z}[x]$, 
for which ${\rm disc}(f)=-4^tn,$ with $t \geq 0$ and $n$ square-free 
and odd. Let $\alpha$ the unique real root of $f$ and
$\beta, \gamma$ its complex roots. 
It well known that ${\rm disc}(f)=(\alpha-\beta)^2(\alpha-\gamma)^2(\gamma-\beta)^2$.
Let
$L$ be the splitting field of $f$,  $K= \mathbb{Q}(\sqrt{-n}),$ $M =\mathbb{Q}(\alpha)$ and 
let $d_M={\rm disc}(M/\mathbb{Q}).$ By Galois theory, $[L:\mathbb{Q}]\leq 3!=6$.  Since ${\rm disc}(f)<0$, we deduce that $L= \mathbb{Q}(\alpha, \sqrt{{\rm disc}(f)})= \mathbb{Q}(\alpha,\sqrt{-n})$.
\par Assume $2 \mid d_M$. By the Dedekind-Kummer theorem, 
the prime $2$ is ramified over $M$. So $\langle 2 \rangle$ 
decomposes either as $\mathfrak{P}_1^2\mathfrak{P}_2,$ or as  $\mathfrak{P}^3$.   
\par We first consider the case where $\langle 2 \rangle=\mathfrak{P}_1^2\mathfrak{P}_2.$ 
Let $\mathfrak{P}$ be a prime ideal above $p$  not dividing $2n$. If $\mathfrak{P}$ is ramified over $L,$ then $e(\mathfrak{B}\mid
\mathfrak{P}) =3$ since $L/K$ is a Galois extension ($\mathfrak{B}$ is a prime ideal of $L$ above $\mathfrak{P}$). One deduces that $e(\mathfrak{B}\mid p) \geq3$. Since 
the ramification index $e(\cdot)$ is a multiplicative function we obtain a contradiction and deduce that 
all prime ideals of $K$ above $p\nmid 2n$ are unramified. Let $\mathfrak{P}$ be a prime ideal of $K$ above $p \mid n$. We assume 
$\mathfrak{P}$ ramifies over $L$ (and so $e(\mathfrak{B}\mid
\mathfrak{P}) =3$). Since $p \mid d_K$, we obtain by 
the Dedekind-Kummer theorem  that $p$ is ramified and so totally ramified over $L$. Let $\mathfrak{P}'$ be a prime ideal of $M$ such that $\mathfrak{B}$ is above $\mathfrak{P}'$. Using the fact that the ramification index $e(\cdot)$ is a multiplicative function, we get $e(\mathfrak{P}' \mid p) =3$. The different of the extension $M/\mathbb{Q}$ is divisible by 
$\mathfrak{P}'^2$ by Proposition 8 of \cite{L}.
Since the norm of the different equals the discriminant (see for example Proposition 14 of \cite{L}), we deduce that the exponent of $p$ in $n$ is  greater than $2,$ contradicting $n$ 
being a square-free integer. So all the prime ideals of $K$ above $p$ dividing $n$ 
are unramified. If $-n \equiv 1 \pmod*{4},$ then $d_K=-n$ and so $2 \nmid d_K$. If $-n \equiv 1 \pmod*{8}$, then we have  $\langle 2 \rangle=\mathfrak{P}_1'\mathfrak{P}_2'$. Assume that 
the $\mathfrak{P}_i'$ are unramified over $L$. Let $\mathfrak{B}'$ be  a prime ideal of $L$ above $\mathfrak{P}_i'$. Since $L/K$ is Galois and the ramification index $e(\cdot)$ is a multiplicative function, one gets a contradiction. So the $\mathfrak{P}_i'$ are ramified over $L$. Since $(2,3)=1$, we deduce by the definition of the conductor that $\nu_{\mathfrak{P}_i'}(\mathfrak{F})=1,$ and hence $\mathfrak{F}=\mathfrak{P}_1'\mathfrak{P}_2'.$ If $-n \equiv 5 \pmod*{8}$, then $\langle 2 \rangle$ is prime ideal over $K$. By the same argument as above, we 
deduce that $\langle 2 \rangle$ ramifies and so $\mathfrak{F}=\langle 2 \rangle$. Now, if $-n \equiv 3 \pmod*{4}$, then $d_K=-4n$ and so $2$ ramify according to Dedekind-Kummer theorem i.e. $\langle 2 \rangle= \mathfrak{P}^2$. If $\mathfrak{P}$ is ramified over $L$, then $2$ is totally ramified over $L,$ 
contradicting the assumption that $\langle 2 \rangle=\mathfrak{P}_1^2\mathfrak{P}_2$ over $M$. So 
$\mathfrak{P}$ is unramified and   $\mathfrak{F}=\langle 1 \rangle$.  
\par Assume now $\langle 2 \rangle=\mathfrak{P}^3$ over $M$. By the same argument as before, one deduces that all prime 
ideals $\mathfrak{P}'$ of $K$ above $p\neq 2$ are unramified. It follows that $\mathfrak{F}=\mathfrak{P}_1'\mathfrak{P}_2'$ if $-n \equiv 1 \pmod*{8}$ and $\mathfrak{F}=\langle 2 \rangle$ if $-n \equiv 5 \pmod*{8}$.  If  $-n \equiv 3 \pmod*{4}$, then $d_K=-4n$ and so 
$\langle 2 \rangle= \mathfrak{P}^2$ by the Kummer-Dedekind theorem. If $\mathfrak{P}$ is 
unramified over $L$, then we get the  contradiction 
using the multiplicativity of the ramification index. So $\mathfrak{P}$ is ramified over $L$ and $\mathfrak{F}=\mathfrak{P}$. 
Now, if $2 \nmid d_M$, then
we use the same argument as above to get $\mathfrak{F}=\langle 1 \rangle$. 
\par Let $H_{\mathfrak{F}}$ be the Hilbert class field of $K$ modulo $\mathfrak{F}$.  
By definition of the Hilbert class field,  we have $L\subseteq H_{\mathfrak{F}}.$
It is well known (see for example \cite{Mi}) that $$h_{\mathfrak{F}}= 
\frac{h_KN\mathfrak{F}}{(U:U_{\mathfrak{F}})}  \prod_{\mathfrak{P} \mid \mathfrak{F}}^{}  \left( 1- \dfrac{1}{N(\mathfrak{P})} \right),$$ 
where $h_{\mathfrak{F}}= \# Cl_{\mathfrak{F}}$, $h_K$ 
is the class number of $K$, $U= \mathcal{O}_K^*,~~U_{\mathfrak{F}}= U \cap K_{\mathfrak{F}},$ with $$K_{\mathfrak{F}}:= \{ x \in K^* ~\mid~~ \nu_{\mathfrak{P}}(x)=0 \quad \mbox{for }~~\mbox{all} \quad \mathfrak{P} \mid \mathfrak{F} \},$$ and $(U:U_{\mathfrak{F}})$ denotes the cardinality of the quotient group $U/U_{\mathfrak{F}}$. As $U=\{ \pm 1\}$ 
we have $U_{\mathfrak{F}}=\{ \pm 1\}$.  
 By the Artin reciprocity law $[H_{\mathfrak{F}}:K]$ equals 
$h_{\mathfrak{F}}.$ 

\noindent $\star$ \textbf{Case 1:} $2 \mid d_M, -n \equiv 1 \pmod*{8}$ and $h_K=3$. 
\par By Theorem \ref{thm1}, we have $\mathfrak{F}= \mathfrak{P}_1'\mathfrak{P}_2'$ and so $$[H_{\mathfrak{F}}:K] = 3 \cdot4\cdot (1-1/2) 
( 1-1/2)= 3= [L:K].$$ Since $L\subseteq H_{\mathfrak{F}},$ it follows that $L= H_{\mathfrak{F}}$.\\
$\star$ \textbf{Case 2:} $2 \mid d_M,~-n \equiv 5 \pmod*{8}$ and $h_K=1$. 
\par By Theorem \ref{thm1}, we have $\mathfrak{F}= \langle 2 \rangle$ and so $$[H_{\mathfrak{F}}:K] = 1 \cdot4\cdot (1-1/4)= 3= [L:K].$$ Since $L\subseteq H_{\mathfrak{F}},$ it follows that $L= H_{\mathfrak{F}}$.\\
$\star$ \textbf{Case 3:} $\langle 2 \rangle=\mathfrak{P}_1^2,\mathfrak{P}_2,~-n \equiv 3 \pmod*{4}$ and $h_K=3$.\par By Theorem \ref{thm1}, we have $\mathfrak{F}= \langle 1 \rangle$ and so $$[H_{\mathfrak{F}}:K] = h_K = 3 = [L:K].$$ Since $L\subseteq H_{\mathfrak{F}},$ it follows that $L= H_{\mathfrak{F}}$.\\
$\star$ \textbf{Case 4:}  $\langle 2 \rangle=\mathfrak{P}^3,~-n \equiv 3\pmod*{4}$ and $h_K=3$. 
\par By Theorem \ref{thm1}, we have $\mathfrak{F}= \mathfrak{P}$ and so $$[H_{\mathfrak{F}}:K] = 3 \cdot2\cdot (1-1/2)= 3= [L:K].$$ Since $L\subseteq H_{\mathfrak{F}},$ it follows that $L= H_{\mathfrak{F}}$.\\
$\star$ \textbf{Case 5:} $2 \nmid d_M$ and $h_K=3$. 
\par By Theorem \ref{thm1}, we have $\mathfrak{F}= \langle 1 \rangle$ and so $$[H_{\mathfrak{F}}:K] = h_K = 3 = [L:K].$$ Since $L\subseteq H_{\mathfrak{F}},$ it follows that $L= H_{\mathfrak{F}}$.\\

Thus we have dealt with all five cases completing the proof.
\end{proof}

\section{Proof of Theorem \ref{thm3}}\label{sec:thm3}
The proof is similar to that of Theorem 9.4 in the book of Cox \cite{Cox}.
\begin{proof}
We are going to show that the first assertion is equivalent to the second one and the first assertion is equivalent to the third one. Assume $p$ splits completely over $L$. Then it splits completely 
over $K$ and so $p= \mathfrak{P}_1\mathfrak{P}_2$, with $\mathfrak{P}_1$ and $\mathfrak{P}_2$ prime ideals of $\mathcal{O}_K$. By 
the Artin reciprocity law and Theorem \ref{thm1} it follows that $\mathfrak{P}_1=(a)$, where $a \in \mathcal{O}_K$ and $\nu_{\mathfrak{P}_i}(a-1)\geq \nu_{\mathfrak{P}_i}(\mathfrak{F})$.\\
\noindent $\star$ \textbf{Case 1:} $2 \mid d_M,~-n \equiv 1 \pmod*{8}$ and $h_K=3$. 
\par By Theorem \ref{thm1}, we have $\langle 2 \rangle \mid (a-1)$ and so $a-1=2t,$  with $t \in \mathcal{O}_K$. Since $-n \equiv 1 \pmod*{8}$, it follows that $\mathcal{O}_K = \mathbb{Z}\left[ \frac{1+\sqrt{-n}}{2}\right]$. 
Thus we can write
$t=X_1+X_2(1+\sqrt{-n})/2$ with $X_1, X_2 \in \mathbb{Z}$ and
so $a=(2X_1+X_2+1) + X_2\sqrt{-n}.$  Hence
$$p= N_{K/\mathbb{Q}}(\mathfrak{P}_1) = N_{K/\mathbb{Q}}(a)= X^2+nY^2,\,\text{with}\,X,Y \in \mathbb{Z}.$$
$\star$ \textbf{Case 2:} $2 \mid d_M,~-n \equiv 5 \pmod*{8}$ and $h_K=1$. 
\par We proceed as in Case 1 to get the result.\\
$\star$ \textbf{Case 3:} $\langle 2 \rangle=\mathfrak{P}_1^2,\mathfrak{P}_2,~-n \equiv 3 \pmod*{4}$ and $h_K=3$.
\par Since $-n \equiv 3 \pmod*{4}$, it follows that $\mathcal{O}_K = \mathbb{Z}\left[ \sqrt{-n}\right]$. Hence we get $a=X_1 + X_2\sqrt{-n},$ which implies 
 $$p= N_{K/\mathbb{Q}}(\mathfrak{P}_1) = N_{K/\mathbb{Q}}(a)= X_1^2+nX_2^2,\,\text{with}\,X_1,X_2
 \in \mathbb{Z}.$$ 
$\star$ \textbf{Case 4:} $\langle 2 \rangle=\mathfrak{P}^3,~-n \equiv 3\pmod*{4}$ and $h_K=3$.
\par We proceed as in 
Case 3 to get the desired result.\\
$\star$ \textbf{Case 5 (1) ($2 \nmid d_M$ and $-n \not \equiv 5 \pmod*{8}$ and $h_K=3$)}. 
We consider two subcases. If $-n \equiv 3\pmod*{4}$, then we have the result using the same argument as above. 
If $-n \equiv 1\pmod*{8}$, then  
$$\mathcal{O}_K = \mathbb{Z}\left[ \frac{1+\sqrt{-n}}{2}\right] \quad \mbox{and} \quad a= \left( \frac{2X_1+X_2}{2}\right)  +\left( \frac{X_2}{2}\right)  \sqrt{-n}.$$ Let us 
show that $X_2$ is an even integer in order to conclude that $(2X_1+X_2)/2$ and $X_2/2$ are integers,
which together with $p=N_{K/\mathbb{Q}}(a)$ will give us the desired result. 
If $X_2$ were an odd integer, then we would get $4p = X^2+nY^2$, with $X$ and $Y$ odd integers. Using the fact that $-n \equiv -1 \pmod*{8},$ we obtain $4p \equiv 0 \pmod*{8}$ and so $2 \mid p,$ which is a contradiction.
\par Now assume that $p=X^2+nY^2=(X+Y\sqrt{-n})(X-Y\sqrt{-n})$, with $X,Y \in \mathbb{Z}.$\\
 \noindent $\star$ \textbf{Case 1:} $2 \mid d_M,~-n \equiv 1 \pmod*{8}$ and $h_K=3$. 
\par By Theorem \ref{thm1}, we have $\langle 2 \rangle \mid (a-1)$ and $X+Y \equiv 1 \pmod*{2}$ since 
$p$ and $n$ are odd integers. Put $a = (1+\sqrt{-n})/2,$ 
we get $X+Y\sqrt{-n}=(X-Y)+2a$. By the Artin reciprocity law, we deduce that $(X+Y\sqrt{-n})$ splits completely over $L$. Likewise we show that $(X-Y\sqrt{-n})$ splits completely over $L$. \\
$\star$ \textbf{Case 2:} $2 \mid d_M$ and $-n \equiv 5 \pmod*{8}$ and $h_K=1$. 
\par We proceed as in Case 1 to get the desired result.\\
$\star$ \textbf{Case 3:} $\langle 2 \rangle=\mathfrak{P}_1^2,\mathfrak{P}_2$ and $-n \equiv 3 \pmod*{4}$ and $h_K=3$. 
\par Since $\mathcal{O}_K=\mathbb{Z}[\sqrt{-n}],$ the result follows immediately.\\
$\star$ \textbf{Case 4:} $\langle 2 \rangle=\mathfrak{P}^3,~-n \equiv 3\pmod*{4}$ and $h_K=3$.
\par Since $\mathcal{O}_K=\mathbb{Z}[\sqrt{-n}],$ the result follows immediately.\\
$\star$ \textbf{Case 5:} (1) $2 \nmid d_M,~-n \not \equiv 5 \pmod*{8}$ and $h_K=3$. 
\par We have $X+Y\equiv 1 \pmod*{2}$ 
since $p$ and $n$ are odd integers. We also have $\mathfrak{P}=\langle 2, 1+\sqrt{-n}\rangle$ since $-2=-4k+2\sqrt{-n} - 2 + (1-n-2\sqrt{-n}),$ with $-n+1= 4k$. 
Now we want to show that $X+Y\sqrt{-n}-1 \in \mathfrak{P}$. 
We can write $X-1=2a_1-Y,$ with $a_1 \in \mathbb{Z}$ 
as $X+Y$ is odd, 
and hence $$x+y\sqrt{-n}-1=2a_1-Y+Y\sqrt{-n}= 2a_1-Y(\sqrt{-n}-1) \in \mathfrak{P}.$$ So $\mathfrak{P} \mid (X+Y\sqrt{-n}-1)$ and we conclude that $(Y\pm X\sqrt{-n})$ splits completely over $L$. Hence $p$ splits completely over $L$, completing the proof of the first equivalence.

\par Suppose $p$ splits completely over $L$, then $p=X^2+nY^2$ for some $X,Y \in \mathbb{Z}$ which implies $\kronecker{-n}{p}=1.$  Moreover,  $p$ splits completely over $M$. If $f$ has no root 
in $\mathbb{Z}/p\mathbb{Z}$, then $p$ is a prime ideal of $M$ 
with inertia degree $3$ by Proposition 8.3  of \cite{JN}, contradicting the fact that $p$ splits completely over $M$. Assume that $\kronecker{-n}{p}=1$ and $f$ has a root in $\mathbb{Z}/p\mathbb{Z}$. The 
assumption that $\kronecker{-n}{p}=1$ 
ensures that $p=\mathfrak{P}_1\mathfrak{P}_2$, where $\mathfrak{P}_1, \mathfrak{P}_2$ are prime ideals of $\mathcal{O}_K$. Since $$\mathcal{O}_K/\mathfrak{P}_i \simeq \mathbb{Z}/p\mathbb{Z}, \quad \mbox{with} \quad i=1,2, $$ we deduce that 
$f$ has a root in $\mathcal{O}_K/\mathfrak{P}_i$. Using Proposition 8.3 of \cite{JN} 
and the fact that $L/K$ is a Galois extension, we infer that $\mathfrak{P}_i$ splits completely over 
$L,$ and so $p$ 
splits completely over $L$.

\par Now we treat the case where $2 \nmid d_M,-n \equiv 5 \pmod*{8}$ 
and $h_K=3$.  Suppose that $p$ splits completely over $K,$ and hence $p=\mathfrak{P}_1\mathfrak{P}_2$, where $\mathfrak{P}_1, \mathfrak{P}_2$ are 
prime ideals of $K$. By 
the Artin reciprocity law, $\mathfrak{P}_i=(a_i)$, where $a_i \in \mathcal{O}_K$. We can 
write $$a_i= \left( \frac{2X_1\pm X_2}{2}\right)  \pm \left( \frac{X_2}{2}\right)  \sqrt{-n},\quad X_1,X_2\in \mathbb Z.$$ Since $N_{K/ \mathbb{Q}}(\mathfrak{P}) =p,$ it follows that 
$$p= \left( \frac{2X_1\pm X_2}{2}\right)^2  + n\left( \frac{X_2}{2}\right)^2,$$  and $2X_1 \pm X_1 \pm X_2 \equiv 0 \pmod*{2}.$ So we have the result.   Now assume that  
$$p= \left( \frac{X}{2}\right)^2  + n\left( \frac{Y}{2}\right)^2= \left( \frac{X}{2} 
+\frac{Y}{2} \sqrt{-n}\right) \left( \frac{X}{2} -\frac{Y}{2} \sqrt{-n}\right),\quad X,Y\in \mathbb Z.$$ Put $a=(1+\sqrt{-n})/2.$ Thus $\sqrt{-n} = 2a-1$ and one obtains 
$$\frac{X}{2} +\frac{Y}{2} \sqrt{-n} = \frac{X-Y}{2} + aY\quad \text{and}\quad
\frac{X}{2} -\frac{Y}{2} \sqrt{-n} = \frac{X+Y}{2} - aY.$$ 
Since 
by assumption $X+Y$ is even, we get $$ \frac{X}{2} +\frac{Y}{2} \sqrt{-n} \in\mathcal{O}_K \quad \mbox{and} \quad  \frac{X}{2} -\frac{Y}{2}\sqrt{-n} \in\mathcal{O}_K.$$  
The corresponding ideals
$\left( \frac{X}{2} +\frac{Y}{2} \sqrt{-n} \right) $ and $\left( \frac{X}{2} -\frac{Y}{2} \sqrt{-n} \right) $ 
split completely over $L$ by the
Artin reciprocity law.
\end{proof}

\section{Proof of Corollary \ref{cor1}}\label{sec:corollary}
\begin{proof}
We consider the polynomial $f=x^3-x-1$. We have ${\rm disc}(f)=-23$ and $h_K = 3$, so, by Theorem \ref{thm1}, we deduce that the splitting field $L$ of $f$ is the Hilbert class field of $K$ modulo $\mathfrak{F}=\langle 1 \rangle.$ By Theorem \ref{thm3}, it follows that a prime $p\nmid 2\cdot 23$ splits completely over $L$ if and only if $p=X^2+23Y^2$, with $X,Y \in \mathbb{Z}$. So $2) \Leftrightarrow 3)$. Now assume $p$ splits completely over $L$. By Binet's formula, we have $P_n= \alpha^n + \beta^n + \gamma^n,$ where $\alpha, \beta, \gamma$ are the roots of $f$. Let $\mathfrak{B}$ be a prime ideal of $L$ above $p$. Since $p$ splits completely over $L$, it follows that the decomposition group of $\mathfrak{B}$ is trivial. So  $x \equiv x^{p} \pmod*{\mathfrak{B}}$ 
for every $x \in \mathcal{O}_L.$ 
Hence, modulo $\mathfrak{B},$
$$
	P_{p+1} \equiv  \alpha^2+  \beta^2+ \gamma^2 
	\equiv  (\alpha+\beta+\gamma)^2-2(\alpha \beta + \alpha \gamma + \beta \gamma)\equiv 0^2-2(-1)\equiv  2.$$
 Next suppose that $P_{p+1}\equiv2 \pmod*{p}$. By assumption $p\nmid 2\cdot 3\cdot 23$, so 
$p$ is unramified over $L,$ which implies that the decomposition group of $\mathfrak{B}$ is a cyclic group. 
Next, our aim is to show that $D(\mathfrak{B})$ is trivial. 
It is  clear that $D(\mathfrak{B})$ is a subgroup of $\text{Gal}(L/\mathbb{Q}),$ and so 
$\# D(\mathfrak{B}) \in \{1,2,3\}$ by Lagrange's theorem. If $\# D(\mathfrak{B})=2$, by symmetry, we can assume without loss of generality that the Frobenius element is given by $$\sigma(\alpha)= \beta,~~\sigma(\beta)= \alpha,~~\sigma(\gamma)= \gamma.$$ Hence, we have 
$$
	2 \equiv  \alpha^{p+1}+  \beta^{p+1}+ \gamma^{p+1}\\ 
	\equiv  \alpha \beta + \beta \alpha + \gamma^2 \\
	\equiv  2\alpha \beta +\gamma^2,
$$
Using the fact that $\alpha \beta + \alpha \gamma + \beta \gamma=-1,$ one 
gets modulo $\mathfrak{B}$ 
$$
4 \equiv -2(\alpha \gamma + \beta \gamma)+\gamma^2 \equiv   3\gamma^2.
$$
As $\gamma^2 \equiv 2-2\alpha \beta \pmod*{\mathfrak{B}},$  
it follows that $\alpha \beta \equiv -1/3 \pmod*{\mathfrak{B}}.$ Since $\alpha \beta \gamma =1$, we deduce that $\gamma \equiv -3 \pmod*{\mathfrak{B}}$. Hence we have 
$23/3 \equiv 0  \pmod*{\mathfrak{B}},$ contradicting $p\nmid 3\cdot 23.$ So  $\# D(\mathfrak{B}) \neq 2.$ Assume $\# D(\mathfrak{B}) = 3,$  we can assume without loss of generality that the Frobenius element is given by $$\sigma(\alpha)= \beta,~~\sigma(\beta)= \gamma,~~\sigma(\gamma)= \alpha.$$ Hence, we get 
modulo $\mathfrak{B}$ 
$$2 \equiv \alpha \beta + \alpha \gamma + \beta \gamma \equiv -1.$$ So $3\equiv 0 \pmod*{\mathfrak{B}},$ 
contradicting $p\neq 3$.
\end{proof}

\section{Proof of Corollary \ref{cor2}}\label{sec:cor2}
\begin{proof}
We consider the polynomial $f=x^3-2x^2+4x-4$. We have ${\rm disc}(f)=-4^2\cdot 11$, $h_K = 1$ and $\langle 2 \rangle=\mathfrak{P}^3$ in $M$ (see Proposition 10.5.2 \cite{A}), so, by Theorem \ref{thm1}, we deduce that the splitting field $L$ of $f$ is the Hilbert class field of $K$ modulo $\mathfrak{F}=\langle 2 \rangle.$ By Theorem \ref{thm3}, it follows that a prime $p\nmid 2\cdot 3\cdot 11 \cdot 13$ splits completely over $L$ if and only if $p=X^2+11Y^2$, with $X,Y \in \mathbb{Z}$. It is easy to see that $2)$ implies  $1)$.  By Binet's formula, we have $\sqrt{-176}\,B_n= (\gamma - \beta)\alpha^n + (\alpha - \gamma)\beta^n + (\beta - \alpha)\gamma^n,$ where $\alpha, \beta, \gamma$ are the roots of $f$. Let $\mathfrak{B}$ be a prime ideal of $L$ above $p$.  Now let's suppose that $B_{p}\equiv 0 \pmod*{p}$. The assumption on $p$ ensures that it is unramified over 
$L,$ and so the decomposition group of $\mathfrak{B}$ is a cyclic group. Now, we want to show that $D(\mathfrak{B})$ is trivial. It  is 
clear that $D(\mathfrak{B})$ is a subgroup of $\text{Gal}(L/\mathbb{Q})$ and so 
$\# D(\mathfrak{B}) \in \{1,2,3\}$ by Lagrange's theorem. If $\# D(\mathfrak{B})=2$, by symmetry, we can assume without loss of generality that the Frobenius element is given by $$\sigma(\alpha)= \beta,~~\sigma(\beta)= \alpha,~~\sigma(\gamma)= \gamma.$$ Hence, we have modulo $\mathfrak{B}$
$$
0 \equiv  (\gamma - \beta)\beta + (\alpha - \gamma)\alpha + (\beta - \alpha)\gamma \equiv 2\gamma (\beta - \alpha) + (\alpha-\beta)(\alpha+\beta).
$$
Since $\alpha \not \equiv \beta  \pmod*{\mathfrak{B}}$, it follows that $\alpha+\beta  \equiv 2\gamma \pmod*{\mathfrak{B}}$ and so $\gamma \equiv 2/3 \pmod*{\mathfrak{B}}$. Using the fact that $\alpha \beta \gamma =4$, we obtain $\alpha \beta \equiv 6 \pmod*{\mathfrak{B}}.$ However, $\gamma (\alpha + \beta) + \alpha \beta \equiv 6 \pmod*{\mathfrak{B}}$ and so $(2\cdot 13)/9 \equiv 0 \pmod*{\mathfrak{B}},$ 
contradicting our assumption that $p \nmid 2\cdot 3\cdot 13$. So  $\# D(\mathfrak{B}) \neq 2.$ 
Suppose that $\# D(\mathfrak{B}) = 3.$  Then we can assume without loss of generality that the Frobenius element is given by $$\sigma(\alpha)= \beta,~~\sigma(\beta)= \gamma,~~\sigma(\gamma)= \alpha.$$ Hence, we get 
modulo $\mathfrak{B}$
\begin{eqnarray*}
	0 &\equiv & (\gamma - \beta)\beta + (\alpha - \gamma)\gamma + (\beta - \alpha)\alpha \\ 
	&\equiv & \gamma \beta + \beta \alpha + \gamma \alpha - (\alpha^2+\beta^2 + \gamma^2) \\
	&\equiv & 3(\gamma \beta + \beta \alpha + \gamma \alpha) + (\alpha +\beta + \gamma)^2 \\
	&\equiv & 3\cdot 4+4,
\end{eqnarray*}
 contradicting our assumption that $p>2,$ 
 and so $\# D(\mathfrak{B})=1.$ 
 \end{proof}

\section{Proof of Corollary \ref{cor5}}\label{sec:cor5}
\begin{proof}
We consider the polynomial $f=x^3-x^2-1$. We have ${\rm disc}(f)=-31$ and $h_K = 3$, so, by Theorem \ref{thm1}, we deduce that the splitting field $L$ of $f$ is the Hilbert class field of $K$ modulo $\mathfrak{F}=\langle 1 \rangle.$ By Theorem \ref{thm3}, it follows that a prime $p\nmid 2\cdot 3 \cdot 29\cdot 31$ splits completely over $L$ if and only if $p=X^2+31Y^2$, with $X,Y \in \mathbb{Z}$. It 
is easy to see that $ 2)$ implies $1)$. Now assume that $C_p \equiv 0 \pmod*{p}.$ By Binet's formula, we have $\sqrt{-31}\,B_n= (\gamma - \beta)\alpha^n + (\alpha - \gamma)\beta^n + (\beta - \alpha)\gamma^n,$ where $\alpha, \beta, \gamma$ are the roots of $f$. Let $\mathfrak{B}$ be a prime ideal of $L$ above $p$.   
The assumption on $p$ ensures that it is unramified over 
$L,$ which implies that the decomposition group of $\mathfrak{B}$ is a cyclic group. Now, we want to show that $D(\mathfrak{B})$ is trivial. It  is clear 
that $D(\mathfrak{B})$ is a subgroup of $\text{Gal}(L/\mathbb{Q})$ and so 
 $\# D(\mathfrak{B}) \in \{1,2,3\}$ by Lagrange's theorem. If $\# D(\mathfrak{B})=2$, by symmetry, we can assume without loss of generality that the Frobenius element is given by $$\sigma(\alpha)= \beta,~~\sigma(\beta)= \alpha,~~\sigma(\gamma)= \gamma.$$ Hence, we have modulo $\mathfrak{B}$
$$
	0 \equiv  (\gamma - \beta)\beta + (\alpha - \gamma)\alpha + (\beta - \alpha)\gamma  
	\equiv  2\gamma (\beta - \alpha) + (\alpha-\beta)(\alpha+\beta).
$$
Since $\alpha\not \equiv \beta \pmod*{\mathfrak{B}}$, it now follows that $\alpha+\beta  \equiv 2\gamma \pmod*{\mathfrak{B}},$ which 
together with $1 = \alpha+\beta + \gamma$ yields $\gamma \equiv 1/3 \pmod*{\mathfrak{B}}.$ 
Since $\alpha \beta \gamma =1$, we obtain $\alpha \beta \equiv 3 \pmod*{\mathfrak{B}}.$ 
However, $\gamma (\alpha + \beta) \equiv 3 \pmod*{\mathfrak{B}},$ which implies $-29/3 \equiv 0 \pmod*{\mathfrak{B}},$ contradicting our assumption that $p \nmid 3\cdot 29$. So  $\# D(\mathfrak{B}) \neq 2.$ 
\par Assume $\# D(\mathfrak{B}) = 3.$ 
Without loss of generality we can assume 
that the Frobenius element is given by $$\sigma(\alpha)= \beta,~~\sigma(\beta)= \gamma,~~\sigma(\gamma)= \alpha.$$ Hence, we get  modulo $\mathfrak{B}$:
\begin{eqnarray*}
	0 &\equiv & (\gamma - \beta)\beta + (\alpha - \gamma)\gamma + (\beta - \alpha)\alpha \\ 
	&\equiv & \gamma \beta + \beta \alpha + \gamma \alpha - (\alpha^2+\beta^2 + \gamma^2) \\
	&\equiv & 3(\gamma \beta + \beta \alpha + \gamma \alpha) + (\alpha +\beta + \gamma)^2 \\
	&\equiv & 3\cdot 0+(-1)^2,
\end{eqnarray*}
which is impossible, and hence
$ \# D(\mathfrak{B})=1,$ 
yielding the equivalence
of (a) and (b).
\par The equivalence 
of (b) and (c) is a consequence of a variation
of the Ramanujan-Wilton congruence, see, e.g., 
Aygin and Williams \cite{AW} or Ciolan et al.\,\cite[Section 4.4]{clm}.
\end{proof}

\noindent \textbf{Acknowledgement}. 
This article has its origin in the second author being very intrigued
by an article of Evink and Helminck \cite{EH}.
The authors thank Francesco Campagna and Peter Stevenhagen for
a long and intense brainstorming session on the contents of this
paper. Further, they thank Seiken Saito, Zhi-Hong Sun and Jan Vonk for helpful e-mail correspondence. Sun pointed out his papers \cite{sun,sun3} and gave some examples of his results.
\par The bulk of the work on this paper was done during
a research stay of the second author at the Max Planck Institute for Mathematics (MPIM) in
the fall of 2022 and January 2023. 
He thanks this institution for the fruitful atmosphere of collaboration.

\bibliographystyle{plain}

\end{document}